% !TeX spellcheck = en_US
\documentclass[a4paper, 11pt]{amsart}
\usepackage{amsmath, amsthm, amscd, amssymb, amsfonts, amsxtra, amssymb, latexsym, mathtools}
\usepackage{enumerate}
\usepackage{verbatim}
\usepackage{textcomp}
\usepackage{float}
\usepackage{pb-diagram}
\usepackage[utf8x]{inputenc}
\usepackage{array}
\usepackage{hyperref}
\hypersetup{colorlinks = true,	allcolors  = blue}

\hoffset -1,6cm \voffset -.75cm 
\textwidth 16cm \textheight 23cm
\pagestyle{headings}
\setlength{\parskip}{0.15cm}
\setlength{\parindent}{0.5cm}
\setlength{\itemindent}{.125cm}
\setlength{\itemsep}{.25cm}
%\renewcommand{\baselinestretch}{1} 
%commands
\newcommand{\G}{\Gamma}
\newcommand{\Z}{\mathbb{Z}}

\newcommand{\N}{\mathbb{N}}
\newcommand{\ff}{\mathbb{F}}
\newcommand{\CC}{\mathcal{C}}
\newcommand{\Tr}{\operatorname{Tr}}

\newcommand{\msk}{\medskip}

%theoremstyle
\newtheorem{thm}{Theorem}[section]
\newtheorem{prop}[thm]{Proposition}

\newtheorem{coro}[thm]{Corollary}

\theoremstyle{definition}
\newtheorem{rem}[thm]{Remark}
\newtheorem{exam}[thm]{Example}

\theoremstyle{remark}

\setcounter{page}{1}

\usepackage{color}

\begin{document} \sloppy 
\numberwithin{equation}{section}
\title[Equienergetic GP-graphs]{Generalized Paley graphs equienergetic \\ with their complements}
\author[R.A.\@ Podest\'a, D.E.\@ Videla]{Ricardo A.\@ Podest\'a, Denis E.\@ Videla}
\dedicatory{\today}
\keywords{Cayley graphs, generalized Paley graphs, energy, equienergetic}
\thanks{2010 {\it Mathematics Subject Classification.} Primary 05C50;\, Secondary 05C25.}
\thanks{Partially supported by CONICET and SECyT-UNC}
%\email{podesta@famaf.unc.edu.ar}
%\email{devidela@famaf.unc.edu.ar}
\address{Ricardo A.\@ Podest\'a, FaMAF -- CIEM (CONICET), Universidad Nacional de C\'ordoba, \newline
	Av.\@ Medina Allende 2144, Ciudad Universitaria, (5000) C\'ordoba, Rep\'ublica Argentina. \newline
%	\address{Ricardo A.\@ Podest\'a -- CIEM, Universidad Nacional de C\'ordoba, CONICET, FAMAF (5000) C\'ordoba, Argentina. 
	{\it E-mail: podesta@famaf.unc.edu.ar}}
\address{Denis E.\@ Videla -- FaMAF -- CIEM (CONICET), Universidad Nacional de C\'ordoba, \newline
	Av.\@ Medina Allende 2144, Ciudad Universitaria, (5000) C\'ordoba, Rep\'ublica Argentina. \newline
	 {\it E-mail: devidela@famaf.unc.edu.ar}}

\begin{abstract}
We consider generalized Paley graphs $\G(k,q)$, generalized Paley sum graphs $\G^+(k,q)$, and their corresponding complements $\bar \G(k,q)$ and $\bar \G^+(k,q)$, for $k=3,4$. 
Denote by $\G=\G^*(k,q)$ either $\G(k,q)$ or $\G^+(k,q)$. 
We compute the spectra of $\G(3,q)$ and $\G(4,q)$ and from them we obtain the spectra of $\G^+(3,q)$ and $\G^+(4,q)$ also. Then we show that, in the non-semiprimitive case, the spectrum of $\G(3,p^{3\ell})$ and $\G(4,p^{4\ell})$  with $p$ prime  
can be recursively obtained, under certain arithmetic conditions, from the spectrum of the graphs $\G(3,p)$ and $\G(4,p)$ for any $\ell \in \N$, respectively. 
Using the spectra of these graphs we give necessary and sufficient conditions on the spectrum of $\G^*(k,q)$ such that $\G^*(k,q)$ and $\bar \G^*(k,q)$ are equienergetic for $k=3,4$. In a previous work we have classified all bipartite regular graphs $\Gamma_{bip}$ and all strongly regular graphs $\Gamma_{srg}$ which are complementary equienergetic, 
i.e.\@ $\{\Gamma_{bip}, \bar{\Gamma}_{bip}\}$ and $\{\Gamma_{srg}, \bar{\Gamma}_{srg}\}$ are equienergetic pairs of graphs. Here we construct infinite pairs of equienergetic non-isospectral regular graphs $\{\Gamma, \bar \Gamma\}$ which are neither bipartite nor strongly regular. 
\end{abstract}

\maketitle

\section{Introduction}
Recently, in \cite{PV5}, we studied regular graphs equienergetic with their complements and we characterized all bipartite graphs and all strongly regular graphs which are equienergetic with their complements. This includes the case of semiprimitive generalized Paley graphs $\G(k,q)$, which are known to be strongly regular. We recall that $\G(k,q)$ is \textit{semiprimitive} if $-1$ is a power of $p$ modulo $k$. Thus, if $\G(k,q)$ is semiprimitive, it is either a classical Paley graph $\G(2,q)$ with $q\equiv 1 \pmod 4$ or else we have $k>2$, $q=p^m$ with $m$ even and $k\mid p^t+1$ for some $t\mid \frac m2$.
Here, by considering generalized Paley graphs of the form $\G(3,q)$ and $\G(4,q)$ which are not semiprimitive, we will produce infinite pairs of complementary equienergetic regular graphs which are neither bipartite nor strongly regular.

\subsubsection*{Spectrum and energy} 
Let $\Gamma$ be a graph of $n$ vertices. The eigenvalues of $\Gamma$ are the eigenvalues $\{\lambda_i\}_{i=1}^n$ of its adjacency matrix. The \textit{spectrum} of $\Gamma$, denoted 
$$Spec(\Gamma) = \{ [\lambda_{i_1}]^{e_1}, \ldots,[\lambda_{i_s}]^{e_{i_s}}\},$$ 
is the set of all the different eigenvalues $\{\lambda_{i_j}\}$ of $\Gamma$ counted with their multiplicities $\{e_{i_j}\}$. 
The \textit{energy} of $\Gamma$ is defined by 
\begin{equation} \label{energy}
E(\Gamma) = \sum_{i=1}^n |\lambda_i| = \sum_{j=1}^s e_{i_j} |\lambda_{i_j}|.
\end{equation}
We refer to the books \cite{BH} or \cite{CDS} for a complete viewpoint of spectral theory of graphs and to \cite{Gu} for a survey on energy of graphs (see also the book \cite{LSG} which contains many open problems related to energy of graphs).

Let $\Gamma_1$ and $\Gamma_2$ be two graphs with the same number of vertices. The graphs are said \textit{isospectral} if $Spec(\Gamma_1) = Spec(\Gamma_2)$ and \textit{equienergetic} if 
$$E(\Gamma_1) = E(\Gamma_2).$$
It is clear by the definitions that isospectrality implies equienergeticity, but the converse does not hold in general.
There are many papers on these problems (see for instance \cite{Ba}, \cite{GPI}, \cite{HX}, \cite{Il}, \cite{PV3}, \cite{PV5}, \cite{Ra} and the references therein).
If a graph $\G$ and its complement $\bar \G$ are equienergetic we will say, as in \cite{RPPAG}, that they are \textit{complementary equienergetic} graphs.
Self-complementary graphs are trivially complementary equienergetic, so the interest is put on non self-complementary graphs.

\subsubsection*{Generalized Paley (sum) graphs}
Let $G$ be a finite abelian group and $S$ a subset of $G$ with $0\notin S$. The \textit{Cayley graph }$X(G,S)$ is the directed graph whose vertex set is $G$ and $v, w \in G$ form a directed edge of $\Gamma$ from $v$ to $w$ if $w-v \in S$. Since $0\notin S$ then $\Gamma$ has no loops. Analogously, the \textit{Cayley sum graph} $X^+(G,S)$ has the same vertex set $G$ but now $v,w\in G$ are connected in $\Gamma$ 
by an arrow from $v$ to $w$ if $v+w \in S$. 
We will use the notation $$X^*(G,S)$$ 
when we want to consider both $X(G,S)$ and $X^+(G,S)$.
Notice that if $S$ is symmetric, that is $-S=S$, then 
$X^*(G,S)$ is an $|S|$-regular simple (undirected without multiple edges) graph. 
However, the graph $X^+(G,S)$ may contain loops. In this case, there is a loop on vertex $x$ provided that  $x+x \in S$. For an excellent survey of spectral properties of general Cayley graphs we refer the reader to \cite{LZ2}. 
In \cite{PV3}, Theorem 2.7, we showed that if $G$ is abelian and $S$ is a symmetric subset of $G$ not containing zero, then $X(G,S)$ and $X^+(G,S)$ are equienergetic graphs. Also, in Proposition 2.10 in \cite{PV3} we give sufficient conditions for $X(G,S)$ and $X^+(G,S)$ to be non-isospectral.

We are interested in \textit{generalized Paley graphs}
\begin{equation} \label{GP} 
\Gamma(k,q) = X(\ff_q,R_k) \qquad \text{with} \qquad R_k = \{x^k: x\in \ff_q^*\},
\end{equation}
that is when $G$ is the finite field $\ff_q$ with $q$ elements and $S$ is the set of nonzero $k$-th powers of $\ff_q$,
and the \textit{generalized Paley sum graph} 
$$\Gamma^+(k,q) = X^+(\ff_q,R_k).$$ 
We will refer to them simply as \textit{GP-graphs} and \textit{GP$^+$-graphs} respectively (or \textit{GP$^*$-graphs} for both indistinctly).
When $k=1$, the graph $\G(1,q)$ is just the complete graph in $q$-vertices $K_q$ and when $k=2$ the graphs $\G(2,q)$ correspond to the classical Paley graphs $P(q)$ if $q \equiv 1 \pmod 4$. The next GP-graphs to consider, aside from general families such as the semiprimitive ones or the Hamming GP-graphs, are those with $k=3,4$, that is $\G(3,q)$ and $\G(4,q)$.
Notice that for $q$ even, we have that $\Gamma^+(k,q)=\Gamma(k,q)$. 
On the other hand, when $q$ is odd, it can be seen that $\Gamma^+(k,q)$ always has loops, since in this case we can find exactly $|R_k|$ elements $x\in \ff_{q}$ such that $x+x\in R_{k}$ (multiplication by $2$ is a bijection in $\ff_{q}$ for $q$ odd). 
In particular, $\Gamma^+(k,q)$ has loops for $k=1,2,3,4$ and $q$ odd.

Generalized Paley graphs have been extensively studied in the few past years. 
Lim and Praeger studied their automorphism groups and characterized all GP-graphs which are Hamming graphs (\cite{LP}). 
Also, Pearce and Praeger characterized those GP-graphs which are Cartesian decomposable (\cite{PP}). Recently, Chi Hoi Yip has studied their clique number (\cite{Y1}, \cite{Y2}, see also %the work of Schneider and Silva 
\cite{SS}).
Both classic Paley graphs and generalized Paley graphs have been used to find linear codes with good decoding properties (\cite{GK}, \cite{KL}, \cite{SL}). 
They can also be seen as particular regular maps in Riemann surfaces 
(\cite{J}, \cite{JW}). 
The number of walks in GP-graphs are related with the number of solutions of 
diagonal equations over finite fields (\cite{V}). Moreover, the diameter of $\G(k,q)$, if it exists, coincides with the Waring number $g(k,q)$ (see \cite{PV6}, \cite{PV7}). 
Under some mild restrictions, the spectrum of GP-graphs determines the weight distribution of their associated irreducible
codes (\cite{PV2}, \cite{PV4}).

\subsubsection*{Cameron's hierarchy}
There is a hierarchy of regularity conditions on graphs due to Cameron (\cite{Ca}). For a non-negative integer $t$ and sets $S_1$ and $S_2$ of at most $t$ vertices, let $\mathcal{C}(t)$ be this graph property:
if the induced subgraphs on $S_1$ and $S_2$ are isomorphic, then the number of vertices joined to every vertex in $S_1$ is equal to the number of vertices joined to every vertex in $S_2$. A graph satisfying property $\mathcal{C}(t)$ is called \textit{$t$-tuple regular}. 
Conditions $\mathcal{C}(t)$ are stronger as $t$ increases. A graph satisfies $\mathcal{C}(1)$ if and only if it is regular and it satisfies $\mathcal{C}(2)$ if and only if it is strongly regular. If a graph satisfies $\mathcal{C}(3)$ then it is the pentagon $C_5$ or it has the parameters of a pseudo Latin square, a negative Latin square or a Smith type graph (see \cite{Ca2}). Up to complements, there are only two known examples of graphs satisfying $\mathcal{C}(4)$ but not $\mathcal{C}(5)$, the Schl\"afli graph and the McLaughlin graph.  Finally, the hierarchy is finite; if a graph satisfies $\mathcal{C}(5)$ then it satisfies $\mathcal{C}(t)$ for any $t$. The only such graphs are $aK_m$ and its complement for $a,m \ge 1$, the pentagon $C_5$ and the $3 \times 3$ square lattice $L(K_{3,3})$ (see \cite{Ca3}).

\subsubsection*{Outline and results}
We now summarize the main results in the paper.
In Section 2, we give the spectrum of $\Gamma^*(3,q)$ and $\Gamma^*(4,q)$ from the spectral relationship between irreducible cyclic codes and GP-graphs (see \cite{PV2}, \cite{PV4}), and the spectral relationship between Cayley graphs and Cayley sum graphs (see \cite{PV3}). Namely, in Theorems \ref{gp3q} and \ref{gp4q} we compute the spectrum of $\Gamma(3,q)$ and $\Gamma(4,q)$ using results in \cite{PV2} while in Theorems \ref{gp3q+} and \ref{gp4q+} we obtain, from the spectrum of $\G(3,q)$ and $\G(4,q)$, the spectrum of the sum graphs $\Gamma^+(3,q)$ and $\Gamma^+(4,q)$, using results in \cite{PV3}.
In the non-semiprimitive case, that is when $p\equiv 1 \pmod k$, the spectra of $\Gamma^*(3,q)$ and $\Gamma^*(4,q)$ are given in terms of certain integer solutions of quadratic diophantine equations. More precisely, solutions of the equations  
\begin{equation} \label{eqs}
p^t=X^2+27Y^2 \quad \text{for $k=3$} \qquad \text{and} \qquad p^{2t}=X^2+4Y^2 \quad \text{for $k=4$},
\end{equation}
with $t\in \N$.

In Section 3 we show that, under certain conditions, the spectrum of $\G(3,q)$ and $\G(4,q)$ in the non-semiprimitive case 
can be obtained recursively from the spectrum of the graphs $\G(3,q')$ and $\G(4,q')$ with $q'<q$. 
More precisely, in Theorem \ref{spectra der 3 gen} we proved that if $p$ is a prime with $p\equiv 1 \pmod 3$ then the spectra of $\G(3,p^{3\ell})$ and $\G(3,p^{3s})$ determine the spectrum of $\G(3,p^{3(t\ell+s})$ for every $\ell \ge 1$ and $0\le s<t$, where $t$ is the minimal integer such that the first equation in \eqref{eqs} has integer solutions $(x,y)$ with $(x,p)=1$.
In Theorem \ref{res cubic} we showed that if $2$ is a cubic residue modulo a prime $p\equiv 1 \pmod 3$ then the spectrum of $\G(3,p)$ determines the spectrum of $\G(3,p^{3\ell})$ for every $\ell \in \N$. Also, in Theorem \ref{spectra der 4} we prove that the spectrum of $\G(4,p)$ always determines the spectrum of $\G(4,p^{4\ell})$ for every $\ell \in \N$. In all these 3 theorems we give the corresponding spectrum explicitly in terms of the integer solution of the corresponding base equation (for instance $p=X^2+27Y^2$ and  $p^2=X^2+4Y^2$ in Theorems \ref{res cubic} and \ref{spectra der 4}, respectively).

In Section 4 we focus on the energy of the graphs $\Gamma^*(3,q)$ and $\Gamma^*(4,q)$. In Proposition \ref{energies} we study their energies, computing them explicitly in the semiprimitive case and giving lower and upper bounds in the non-semiprimitive case.
Then, in Theorem \ref{equien Gp comp} we give conditions on $\Gamma^*(k,q)$ to be equienergetic to its complement $\bar \Gamma^*(k,q)$.
In fact, we show that the graphs $\G(k,q)$ and $\bar \G(k,q)$ are equienergetic if and only if among the non principal eigenvalues of $\G(k,q)$ exactly one of them is positive.

Finally, in Theorems \ref{thm Equien 3} and \ref{thm Equien 4}, by using the results in Sections 3 and 4,
we construct infinite pairs of complementary equienergetic graphs of the form 
$\{\Gamma^*(k,q), \bar \Gamma^*(k,q)\}$ 
for $k=3$ and $4$ and $q=p^m$ which are neither bipartite nor strongly regular. 
This is relevant since we have previously obtained complementary equienergetic pairs of regular graphs which are either bipartite or strongly regular (see Sections 4--7 in \cite{PV5}). 
In fact, we have characterized all such complementary equienergetic pairs of regular graphs. That is to say that, in terms of Cameron's hierarchy, we understand complementary equienergetic graphs in the families $\mathcal{C}(2)$, $\mathcal{C}(3)$ and $\mathcal{C}(5)$; and for bipartirte graphs in $\mathcal{C}(1)$. For general (i.e.\@ non-bipartite) graphs in $\mathcal{C}(1)$ the problem seems to be out of scope. 
The pairs obtained in this work are precisely non-bipartite regular graphs which are not strongly regular, that is non-bipartite graphs in $\mathcal{C}(1) \smallsetminus \mathcal{C}(2)$.

\section{Spectrum} 
Here we will compute the spectrum of the graphs $\G(k,q)$ and $\G^+(k,q)$ for $k=3,4$ using previous results obtained by the authors. 

We begin with the spectrum of the GP-graphs $\G(3,q)$ and $\G(4,q)$, which will follow from an spectral correspondence with the weight distribution of certain cyclic codes $\CC(3,q)$ and $\CC(4,q)$ obtained in \cite{PV2}, that we now recall. Let $p$ be a prime and $q=p^m$ for some $m$. For $k \mid q-1$ consider the $p$-ary irreducible cyclic codes 
\begin{equation} \label{Ckqs}
\mathcal{C}(k,q) = \big \{ c_{\gamma} = \big( \Tr_{q/p}(\gamma\, \omega^{ki}) \big)_{i=0}^{n-1} : \gamma \in \ff_{q} \big \} 
\end{equation}
where $\omega$ is a primitive element of $\ff_{q}$ over $\ff_p$.
These are the codes with zero $\omega^{-k}$ and length
\begin{equation} \label{N and n} 
n = \tfrac {q-1}N \qquad \text{with} \qquad N = \gcd(\tfrac{q-1}{p-1}, k).
\end{equation}
Then, we have the following result.

\begin{thm}[\cite{PV2}] \label{pesoaut}
Let $q=p^m$ with $p$ prime and $k \in \N$ such that $k \mid \frac{q-1}{p-1}$. Put $n=\tfrac{q-1}{k}$.
Let $\G(k,q)$ and $\CC(k,q)$ be as in \eqref{GP} and \eqref{Ckqs}. 
Thus, we have: 
\begin{enumerate}[$(a)$]
\item The eigenvalue $\lambda_{\gamma}$ of $\G(k,q)$ and the weight of $c_{\gamma} \in \CC(k,q)$ are related by the expression
\begin{equation} \label{Pesoaut}
\lambda_{\gamma} = n - \tfrac{p}{p-1} w(c_{\gamma}). 
\end{equation}

\item If $\G(k,q)$ is connected the multiplicity of $\lambda_{\gamma}$ is the frequency $A_{w(c_{\gamma})}$ for all $\gamma\in \ff_q$. In particular, 
the multiplicity of $\lambda_0 = n$ is $A_0=1$. 
\end{enumerate}
\end{thm}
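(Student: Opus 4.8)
The plan is to translate the standard character-theoretic description of the adjacency spectrum of the Cayley graph $\G(k,q)=X(\ff_q,R_k)$ into the weight enumerator of the irreducible cyclic code $\CC(k,q)$. I would start from the fact that the additive characters $\psi_\gamma(x)=\zeta_p^{\Tr_{q/p}(\gamma x)}$ ($\gamma\in\ff_q$, with $\zeta_p=e^{2\pi i/p}$) form an orthogonal eigenbasis of the adjacency matrix, so the eigenvalue attached to $\psi_\gamma$ is $\lambda_\gamma=\sum_{s\in R_k}\psi_\gamma(s)$. Since $\omega$ is primitive and $\gcd(k,q-1)=k$ (because $k\mid\frac{q-1}{p-1}\mid q-1$), the powers $\omega^{ki}$, $0\le i<n$, run exactly once over the subgroup $R_k$ of $k$-th powers; hence the coordinates of the codeword $c_\gamma$ are precisely the field traces $\Tr_{q/p}(\gamma s)$, $s\in R_k$. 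Writing $N_j=\#\{s\in R_k:\Tr_{q/p}(\gamma s)=j\}$ for $j\in\ff_p$, one has $\sum_{j}N_j=n$, $w(c_\gamma)=n-N_0$, and $\lambda_\gamma=\sum_{j\in\ff_p}N_j\,\zeta_p^{\,j}$.

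The crucial step is to show that the nonzero frequencies $N_j$ ($j\in\ff_p^*$) all coincide, and this is exactly where the hypothesis $k\mid\frac{q-1}{p-1}$ is used: it is equivalent to $(p-1)\mid|R_k|$, so the unique subgroup $\ff_p^*$ of order $p-1$ lies inside the unique subgroup $R_k$ of order $\frac{q-1}{k}$, whence $aR_k=R_k$ for every $a\in\ff_p^*$. Using $\ff_p$-linearity of $\Tr_{q/p}$ and the substitution $s\mapsto a^{-1}s$ gives $N_j=N_{aj}$ for all $a\in\ff_p^*$, so $N_1=\cdots=N_{p-1}=:M$ with $(p-1)M=w(c_\gamma)$ and $N_0=n-w(c_\gamma)$. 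Then, since $\sum_{j\in\ff_p^*}\zeta_p^{\,j}=-1$,
\[ \lambda_\gamma=N_0-M=\big(n-w(c_\gamma)\big)-\tfrac{w(c_\gamma)}{p-1}=n-\tfrac{p}{p-1}\,w(c_\gamma), \]
which gives $(a)$.

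For $(b)$ I would use that on an abelian group the multiplicity of an eigenvalue $\mu$ of a Cayley graph equals the number of characters $\psi_\gamma$ with $\lambda_\gamma=\mu$; since $w\mapsto n-\frac{p}{p-1}w$ is injective, $\lambda_\gamma=\lambda_{\gamma'}$ holds iff $w(c_\gamma)=w(c_{\gamma'})$, so the multiplicity of $\lambda_\gamma$ is $\#\{\gamma'\in\ff_q:w(c_{\gamma'})=w(c_\gamma)\}$. It then remains to identify this with the frequency $A_{w(c_\gamma)}=\#\{c\in\CC(k,q):w(c)=w(c_\gamma)\}$, for which it suffices that $\gamma\mapsto c_\gamma$ be a bijection onto $\CC(k,q)$. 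Its kernel is $\{\gamma\in\ff_q:\Tr_{q/p}(\gamma s)=0\text{ for all }s\in R_k\}$; because $\G(k,q)$ is connected, $R_k$ generates $(\ff_q,+)$, i.e. spans $\ff_q$ over $\ff_p$, and nondegeneracy of the trace bilinear form forces the kernel to be $0$, so $|\CC(k,q)|=q$ and the map is bijective. In particular the principal eigenvalue $\lambda_0=|R_k|=n$ corresponds to $c_0=0$, which has weight $0$ and frequency $A_0=1$.

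I expect the main obstacle to be the balancing argument in the second paragraph — namely recognising that $k\mid\frac{q-1}{p-1}$ is precisely the condition $\ff_p^*\subseteq R_k$ and then leveraging $\ff_p$-linearity of the trace. The rest (the abelian Cayley-graph spectrum, injectivity of the weight-to-eigenvalue map, and the equivalence ``connected $\Leftrightarrow$ $R_k$ spans $\ff_q$ over $\ff_p$'') is routine bookkeeping.
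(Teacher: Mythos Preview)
The paper does not actually prove this theorem; it is quoted verbatim from the authors' earlier work \cite{PV2} and stated without argument, so there is no in-paper proof to compare against. Your proposal is correct in every detail: the identification of $k\mid\frac{q-1}{p-1}$ with $\ff_p^*\subseteq R_k$ is exactly the point that makes the trace-values over $R_k$ balanced on $\ff_p^*$, collapsing the character sum to $N_0-M=n-\tfrac{p}{p-1}w(c_\gamma)$; and for $(b)$ your use of connectedness to force $R_k$ to $\ff_p$-span $\ff_q$ (hence injectivity of $\gamma\mapsto c_\gamma$ via nondegeneracy of the trace form) is the right mechanism. This is the standard and essentially canonical argument, and is presumably what appears in \cite{PV2}.
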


It is known that some structural properties of graphs can be read from the spectrum. For instance, it is a classic result that a regular graph $\G$ is connected if and only if its principal eigenvalue has multiplicity one and that it is bipartite if and only if the spectrum is symmetric. Also, $\G$ is strongly regular if and only if it is connected with 3 different eigenvalues (disregarding multiplicities).

In the next two theorems we give the spectrum of the GP-graphs $\Gamma(3,q)$ and $\G(4,q)$.

\begin{thm} \label{gp3q}
	Let $q=p^{m} \ge 5$ with $p$ prime such that $3\mid \frac{q-1}{p-1}$ and put $n=\frac{q-1}3$. Thus, the graph $\G(3,q)$ 
	is connected with integral spectrum given as follows:
	\begin{enumerate}[$(a)$]
		\item If $p\equiv 1 \pmod 3$ then $m=3t$ for some $t\in \N$ and 
		$$Spec(\G(3,q)) = \big\{ [n]^1, \big[\tfrac{a\sqrt[3]{q}-1}{3}\big]^n, \big[\tfrac{-\frac{1}{2} (a+9b)\sqrt[3]{q}-1}{3}\big]^n, 
		\big[\tfrac{-\frac 12 (a-9b) \sqrt[3]{q}-1}{3}\big]^n \big\} $$
		where $a,b$ are integers uniquely determined by 
		\begin{equation} \label{ab27}
		4\sqrt[3]{q}=a^2+27b^2, \qquad a\equiv 1 \pmod 3 \qquad \text{and} \qquad (a,p)=1.
		\end{equation} 
		%$$ 4\sqrt[3]{q}=a^2+27b^2, \qquad a\equiv 1 \pmod 3 \qquad \text{and} \qquad (a,p)=1.$$
		
		\item If $p\equiv 2 \pmod 3$ then $m=2t$ for some $t\in \N$ and 
		$$Spec(\G(3,q)) = \begin{cases} 
		\big\{ [n]^1, \big[\tfrac{\sqrt{q}-1}{3}\big]^{2n}, \big[\tfrac{-2\sqrt{q}-1}{3}\big]^n \big\} & 
		\qquad \text{for $m\equiv 0 \pmod 4$}, \\[3mm]
		\big\{ [n]^1, \big[\tfrac{2\sqrt{q}-1}{3}\big]^{n}, \big[\tfrac{-\sqrt{q}-1}{3}\big]^{2n} \big\} & 
		\qquad \text{for $m\equiv 2 \pmod 4$}.
		\end{cases}$$
		In particular, $\G(3,q)$ is a strongly regular graph in this case. 
	\end{enumerate}
\end{thm}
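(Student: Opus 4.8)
The plan is to read off $Spec(\G(3,q))$ from Theorem~\ref{pesoaut}, which reduces the problem to the weight distribution of the irreducible cyclic code $\CC(3,q)$ or, what I would do more directly, to evaluating the Gaussian periods of order $3$ over $\ff_q$. I would first record the arithmetic: the hypothesis $3\mid\tfrac{q-1}{p-1}$ gives $N=\gcd(\tfrac{q-1}{p-1},3)=3$, hence $n=\tfrac{q-1}{3}$; and since $\tfrac{q-1}{p-1}=1+p+\cdots+p^{m-1}\equiv m\pmod 3$ when $p\equiv1\pmod 3$, whereas it is $\equiv 0\pmod 3$ exactly for even $m$ when $p\equiv-1\pmod 3$, the divisibility hypothesis forces $m=3t$ in case $(a)$ and $m=2t$ in case $(b)$. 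Now, $\G(3,q)=X(\ff_q,R_3)$ being a Cayley graph on $(\ff_q,+)$, its eigenvalues are $\lambda_\gamma=\sum_{x\in R_3}\psi(\gamma x)$ for $\gamma\in\ff_q$, where $\psi$ is a fixed nontrivial additive character; one has $\lambda_0=|R_3|=n$, while for $\gamma\ne 0$ the value $\lambda_\gamma$ depends only on the coset of $\gamma$ modulo $R_3$, hence equals one of the three Gaussian periods
\begin{equation*}
\eta_j=\tfrac13\bigl(-1+\bar\zeta^{\,j}\,g(\chi)+\zeta^{\,j}\,g(\bar\chi)\bigr),\qquad j=0,1,2,
\end{equation*}
each attained on a coset of $R_3$ in $\ff_q^*$, of size $n$, where $\chi$ is a cubic multiplicative character of $\ff_q$, $\zeta=\chi(\alpha)$ a primitive cube root of unity for a primitive element $\alpha$, and $g(\chi)=\sum_{x\in\ff_q^*}\chi(x)\psi(x)$. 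As $\chi$ has odd order, $\chi(-1)=1$, so $g(\bar\chi)=\overline{g(\chi)}$, $|g(\chi)|^2=q$, and $\eta_j=\tfrac13\bigl(-1+2\operatorname{Re}(\bar\zeta^{\,j}g(\chi))\bigr)$. From the explicit values found below no $\eta_j$ equals $n$, so $\G(3,q)$ is connected and $n$ has multiplicity one (this is part $(b)$ of Theorem~\ref{pesoaut}). Everything is thus reduced to evaluating $g(\chi)$.

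In case $(b)$, $p\equiv-1\pmod 3$ and $m=2t$, so $\G(3,q)$ is semiprimitive, and the classical semiprimitive evaluation of Gauss sums gives $g(\chi)=\varepsilon\sqrt q$ with $\varepsilon=(-1)^{t-1}$ (the sign correction involving $(p+1)/3$ is trivial here, also when $p=2$). Using $\operatorname{Re}(\bar\zeta)=\operatorname{Re}(\bar\zeta^2)=-\tfrac12$, the three periods collapse to $\tfrac{2\varepsilon\sqrt q-1}{3}$ with multiplicity $n$ and $\tfrac{-\varepsilon\sqrt q-1}{3}$ with multiplicity $2n$; since $\varepsilon=-1$ exactly when $t$ is even, i.e. $m\equiv0\pmod 4$, this is precisely the two-subcase spectrum in the statement, and $\G(3,q)$ --- connected, regular, with three distinct eigenvalues --- is strongly regular.

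In case $(a)$, $p\equiv1\pmod 3$, so the cubic character $\chi$ of $\ff_q$ is the lift through the norm $N_{\ff_q/\ff_p}$ of a cubic character $\chi_p$ of $\ff_p$, and the Davenport--Hasse lifting relation gives
\begin{equation*}
g(\chi)=(-1)^{m-1}g_p(\chi_p)^m=(-1)^{3t-1}\bigl(g_p(\chi_p)^3\bigr)^t=(-1)^{t-1}(p\,\pi)^t,
\end{equation*}
where $\pi=J(\chi_p,\chi_p)\in\Z[\omega]$ is the cubic Jacobi sum, with $\pi\bar\pi=p$ and $\pi\equiv-1\pmod 3$. Then $(-1)^{t-1}\pi^t\equiv-1\pmod 3$, so it can be written as $\tfrac12(a-3b\sqrt{-3})$ with $a,b\in\Z$ and $a\equiv1\pmod 3$; taking norms yields $4p^t=a^2+27b^2$, i.e. $4\sqrt[3]{q}=a^2+27b^2$ as in~\eqref{ab27}, and $(a,p)=1$ since $\pi,\bar\pi$ are non-associate primes, so these conditions determine $a$ (and $b$ up to sign). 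Hence $g(\chi)=p^t\cdot\tfrac12(a-3b\sqrt{-3})$, and substituting into $\eta_j$ with $\bar\zeta^{\,j}=1,\omega^2,\omega$ for $j=0,1,2$ gives $2\operatorname{Re}(g(\chi))=ap^t$, $2\operatorname{Re}(\omega^2g(\chi))=-\tfrac12(a+9b)p^t$ and $2\operatorname{Re}(\omega g(\chi))=-\tfrac12(a-9b)p^t$, which are exactly the three non-principal eigenvalues in the statement, each with multiplicity $n$.

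I expect the main obstacle to be the sign and normalization bookkeeping in case $(a)$: one must check that among the six associates of $\pi^t$ there is a unique one of the form $\tfrac12(a-3b\sqrt{-3})$ with $a\equiv1\pmod 3$, that indeed $(a,p)=1$, and that it fits together with the Davenport--Hasse sign $(-1)^{3t-1}$ so that the eigenvalues come out with exactly the displayed signs; likewise, justifying $\varepsilon=(-1)^{t-1}$ in case $(b)$ requires the precise form of the semiprimitive Gauss sum theorem, with some care when $p=2$ (which is allowed here). All of this sign analysis is precisely what is carried out in \cite{PV2} through the weight distribution of $\CC(3,q)$, so an equivalent and shorter route is to quote that weight distribution and translate it via~\eqref{Pesoaut} and Theorem~\ref{pesoaut}.
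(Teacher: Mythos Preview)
Your approach is correct but takes a genuinely different route from the paper. The paper quotes the weight distribution of $\CC(3,q)$ from Ding--Yang \cite{DY} (Theorems~19 and~20 there) as a black box, translates the weights into eigenvalues via~\eqref{Pesoaut}, and then separately proves connectedness by showing that $n=\tfrac{q-1}{3}$ is a primitive divisor of $q-1$ (needed to invoke part~$(b)$ of Theorem~\ref{pesoaut} for the multiplicities). You instead bypass both Theorem~\ref{pesoaut} and \cite{DY} entirely: you compute the eigenvalues directly as cubic Gaussian periods via the standard character formula for Cayley-graph spectra, evaluate the relevant Gauss sums (semiprimitive evaluation in case~$(b)$, Davenport--Hasse lifting plus the cubic Jacobi sum $\pi=J(\chi_p,\chi_p)$ in case~$(a)$), and read the multiplicities off the coset structure of $R_3$ in~$\ff_q^*$; connectedness then follows a posteriori from $n$ being simple. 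Your route is more self-contained and explains \emph{why} the formulas look the way they do, at the cost of the sign and normalization bookkeeping you flagged (the $(-1)^{t-1}$ in the semiprimitive formula, the congruence $\pi\equiv-1\pmod 3$, and the passage from $(-1)^{t-1}\pi^t$ to $\tfrac12(a-3b\sqrt{-3})$); the paper's route is shorter precisely because all of that is absorbed into the citation of \cite{DY}. Your final remark that one can equivalently just quote the weight distribution and translate via~\eqref{Pesoaut} is exactly what the paper does.
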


\begin{proof}
	Let $q=p^m$. First note that condition $3\mid \frac{q-1}{p-1}=p^{m-1}+\cdots +p+1$ implies that 
	$m=3t$ if $p\equiv 1 \pmod 3$ and $m=2t$ if $p\equiv 2 \pmod 3$. 
	
	We will apply Theorem \ref{pesoaut} to the code $\CC(3,q)$. The spectrum of $\CC(3,q)$ is given in Theorems 19 and 20 in  
	\cite{DY}, with different notations ($r$ for our $q$, $N$ for our $k$, etc). 
	By $(a)$ in Theorem \ref{pesoaut}, the eigenvalues of $\G(3,q)$ are given by 
	\begin{equation} \label{eigen3} 
	\lambda_i = \tfrac{q-1}3 - \tfrac{p}{p-1} w_i
	\end{equation} 
	where $w_i$ are the weights of $\CC(3,q)$. 
	
	If $p\equiv 1 \pmod 3$, by Theorem 19 in \cite{DY}, the four weights of $\CC(3,q)$ are $w_0=0$,
	\begin{equation} \label{sp1} 
	w_1= \tfrac{(p-1)(q-a\sqrt[3]{q})}{3p}, \qquad w_2= \tfrac{(p-1)(q+\frac 12 (a+9b) \sqrt[3]{q})}{3p}, 
	\qquad w_3= \tfrac{(p-1)(q+\frac 12 (a-9b) \sqrt[3]{q})}{3p},
	\end{equation}
	with frequencies $A_0=1$ and $A_1=A_2=A_3=\frac{q-1}3$; 
	where $a$ and $b$ are the only integers satisfying $4\sqrt[3]{q}=a^2+27b^2$, $a\equiv 1 \pmod 3$ and $(a,p)=1$.
	
	On the other hand, if $p\equiv 2 \pmod 3$, by Theorem 20 in \cite{DY}, the three weights of $\CC(3,q)$ are 
	\begin{equation} \label{sp2} 
	w_0=0, \qquad w_1= \tfrac{(p-1)(q-\sqrt{q})}{3p}, \qquad w_2= \tfrac{(p-1)(q+2\sqrt{q})}{3p},
	\end{equation}
	with frequencies $A_0=1$, $A_1=\tfrac{2(q-1)}3$ and $A_2=\tfrac{q-1}3$ 
%	$$A_0=1, \quad A_1=\tfrac{2(q-1)}3 \quad \text{and} \quad A_2=\tfrac{q-1}3$$ 
	if $m\equiv 0 \pmod 4$ while 
	\begin{equation} \label{sp3} 
	w_0=0, \qquad w_1= \tfrac{(p-1)(q-2\sqrt{q})}{3p}, \qquad w_2= \tfrac{(p-1)(q+\sqrt{q})}{3p},
	\end{equation} 
	with frequencies $A_0=1$, $A_1=\tfrac{(q-1)}3$ and $A_2=\tfrac{2(q-1)}3$ 
	%$$A_0=1, \quad A_1=\tfrac{(q-1)}3 \quad \text{and} A_2=\tfrac{2(q-1)}3$$ 
	if $m\equiv 2 \pmod 4$. 
	By introducing \eqref{sp1}, \eqref{sp2} and \eqref{sp3} in \eqref{eigen3}, we get the eigenvalues in ($a$) and $(b$) of the statement. 
	
	\smallskip 
	To compute the multiplicities, we use $(b)$ of Theorem \ref{pesoaut}. The hypothesis that $\G(3,q)$ is connected is equivalent to  
	the fact that $n=\frac{q-1}3$ is a primitive divisor of $q-1$ (see the Introduction, after \eqref{GP}). 
	We now show that this is always the case for $q\ge 5$. 
	
	Suppose that $p\ge 5$. Then, we have that $p^{m-1}-1 \le \frac{q-1}3$, since this inequality is equivalent to $3p^{m-1}-3 \le p^m-1$ 
	which holds for $p\ge 5$. This implies that $n$ is greater that $p^a-1$ for all $a<m$ and, hence, $n$ is a primitive divisor of $q-1$.
	Now, if $p=2$ we only have to check that $n$ does not divide $2^{m-1}-1$, since $n> 2^{m-2}-1$. Notice that $n\mid 2^{m-1}-1$ if and only if $2^{m}-1\mid 3(2^{m-1}-1)$, which can only happen if $m=2$. If $m>2$ we have 
	$3(2^{m-1}-1) \equiv 2^{m-1}-2 \not \equiv 0 \pmod 2^m-1$
	as we wanted. The prime $p=3$ is excluded by hypothesis.
	
Thus, by part $(b)$ of Theorem \ref{pesoaut}, the multiplicities of the eigenvalues of $\G(3,q)$ are the frequencies of the weights of 
$\CC(3,q)$, and we are done.
\end{proof}

Note that in case $(a)$ of the previous theorem, the non-principal eigenvalues 
are of the form 
$$\lambda = \tfrac{\alpha \sqrt[3]q-1}{3} \qquad \text{with} \qquad \alpha = a, \: \tfrac 12(a-9b), \: \tfrac 12(a+9b)$$ 
where $(a,b)$ are the solutions of $4 \sqrt[3]q = X^2+27Y^2$ with $a\equiv 1 \pmod 3$ and $(a,p)=1$.
Furthermore, since $\sqrt{p^{4t+2}}=p\sqrt{p^{4t}}$, for $p=2$ we have a relation between the eigenvalues of $\Gamma(3,2^{2t})$, $\Gamma(3,2^{2t+2})$ and $\Gamma(3,2^{2t+4})$. Namely, 
$$\lambda_3(\Gamma(3,3^{2t})) = \lambda_3(\Gamma(3,3^{2t+2}))
\qquad \text{and} \qquad 
\lambda_2(\Gamma(3,3^{2t+2})) = \lambda_2(\Gamma(3,3^{2t+4})),$$ 
where $\lambda_1 >\lambda_2 >\lambda_3$.

\begin{exam} \label{ej g3g4}
($i$) Let $p=7$ and $m=3$, hence $q=7^3=343$. Since $p\equiv 1 \pmod 3$, we must find $a,b \in \Z$ such that
	$28=a^2+27b^2$, $a\equiv 1 \pmod 3$ and $(a,7)=1$. Clearly $a=b=1$ satisfy these conditions. By Theorem~\ref{gp3q} ($i$) we have  
	$Spec(\G(3,7^3)) = \big\{ [114]^1, [9]^{114}, [2]^{114}, [-12]^{114} \big\}$.

($ii$)
Let $p=2$. By Theorem~\ref{gp3q} ($ii$) we have for instance $Spec(\G(3,2^4)) = \big\{ [5]^1, [1]^{10}, [-2]^{5} \big\}$, 
$Spec(\G(3,2^6)) = \big\{ [21]^1, [5]^{21}, [-3]^{42} \big\}$ and 
$Spec(\G(3,2^8)) = \big\{ [85]^1, [5]^{170}, [-3]^{85} \big\}$.
For $p=5$ we have $Spec(\G(3,5^2)) = \big\{ [8]^1, [3]^{8}, [-2]^{16} \big\}$ and $Spec(\G(3,5^4)) = \big\{ [208]^1, [24]^{416}, [-17]^{208} \big\}$.  
	\hfill $\lozenge$
\end{exam}

\begin{thm}%[\cite{PV4}] 
	\label{gp4q}
	Let $q=p^m$ with $p$ prime such that $4\mid \frac{q-1}{p-1} $ and $q\ge 5$ with $q\ne 9$.  
	Let $n=\frac{q-1}4$. 
	Thus, the graph $\G(4,q)$ is connected with 
	integral spectrum given as follows:
	\begin{enumerate}[$(a)$]
		\item If $p\equiv 1 \pmod 4$ then $m=4t$ for some $t\in \N$ and
		$$Spec(\G(4,q)) = \big\{ [n]^1, \big[\tfrac{\sqrt{q} + 4d\sqrt[4]{q}-1}{4}\big]^n, \big[\tfrac{\sqrt{q} - 4d\sqrt[4]{q}-1}{4}\big]^n, 
		\big[\tfrac{-\sqrt{q} + 2c \sqrt[4]{q}-1}{4}\big]^n, \big[\tfrac{-\sqrt{q} - 2c \sqrt[4]{q}-1}{4}\big]^n \big\}$$
		where $c,d$ are integers uniquely determined by 
\begin{equation} \label{c4d}
		\sqrt{q}=c^2+4d^2, \qquad c\equiv 1 \pmod 4 \qquad \text{and} \qquad (c,p)=1.
\end{equation} 
		
		\item If $p\equiv 3 \pmod 4$ then $m=2t$ for some $t\in \N$ and 
		$$Spec(\G(4,q)) = \begin{cases} 
		\big\{ [n]^1, \big[\tfrac{\sqrt{q}-1}{4}\big]^{3n}, \big[\tfrac{-3\sqrt{q}-1}{4}\big]^n \big \} & 
		\qquad \text{for $m\equiv 0 \pmod 4$}, \\[3mm]
		\big\{ [n]^1, \big[\tfrac{3\sqrt{q}-1}{4}\big]^{n}, \big[\tfrac{-\sqrt{q}-1}{4}\big]^{3n} \big\} & 
		\qquad \text{for $m\equiv 2 \pmod 4$}.
		\end{cases}$$
		In particular, $\G(4,q)$ is a strongly regular graph in this case. 
	\end{enumerate}
\end{thm}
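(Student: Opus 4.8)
The plan is to mirror, step for step, the proof of Theorem \ref{gp3q}, replacing the cubic code $\CC(3,q)$ by the quartic code $\CC(4,q)$ and applying Theorem \ref{pesoaut} with $k=4$. First I would record the reduction of $m$: writing $\tfrac{q-1}{p-1}=1+p+\cdots+p^{m-1}$ and reducing modulo $4$, if $p\equiv 1\pmod 4$ the sum is $\equiv m$, forcing $m=4t$, while if $p\equiv 3\equiv-1\pmod 4$ the powers of $p$ alternate $1,-1$ modulo $4$, so the sum is $\equiv 0$ or $1$ according as $m$ is even or odd, forcing $m=2t$. (The prime $p=2$ cannot occur, since $4\nmid 2^m-1$.) Then, by part $(a)$ of Theorem \ref{pesoaut}, the eigenvalues of $\G(4,q)$ are $\lambda_\gamma=\tfrac{q-1}{4}-\tfrac{p}{p-1}\,w(c_\gamma)$ with $c_\gamma$ ranging over $\CC(4,q)$, so everything reduces to inserting the weight distribution of $\CC(4,q)$.

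For $p\equiv 1\pmod 4$ (the non-semiprimitive, quartic case) I would use that $\CC(4,q)$ has five weights $0=w_0,w_1,\ldots,w_4$, with each $w_i$ for $i\ge 1$ an explicit expression in $q$, $\sqrt q$, $\sqrt[4]q$ and the integers $c,d$ uniquely determined by $\sqrt q=c^2+4d^2$, $c\equiv 1\pmod 4$, $(c,p)=1$ --- the normalization of the quartic Gauss sum (Gaussian period) evaluation being the exact analogue of the one used for $\CC(3,q)$ --- and with each nonzero weight of frequency $\tfrac{q-1}{4}$. Plugging these into $\lambda_\gamma=n-\tfrac{p}{p-1}w$ and simplifying yields the four non-principal eigenvalues displayed in $(a)$; integrality is then immediate since $\sqrt[4]q=p^t\in\Z$ and a one-line congruence modulo $4$ (using that $c$ is odd) shows each numerator is divisible by $4$. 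I would also remark that $(c,p)=1$ forces $c\ne 0$ and $d\ne 0$ --- otherwise $\sqrt q=c^2=p^{2t}$ would give $p\mid c$ --- so the five eigenvalues are pairwise distinct, whence $\G(4,q)$ is \emph{not} strongly regular.

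For $p\equiv 3\pmod 4$ (the semiprimitive case) $\CC(4,q)$ is a two-weight code; its two nonzero weights and their frequencies are those coming from the value $\pm\sqrt q$ of the relevant semiprimitive Gauss sum, the sign depending on the parity of $t$, that is on $m\bmod 4$. Substituting the two possibilities gives the two displayed subcases, each with exactly three distinct eigenvalues, so $\G(4,q)$ is strongly regular here. It remains to justify the multiplicities, which by part $(b)$ of Theorem \ref{pesoaut} equal the frequencies $A_{w_i}$ provided $\G(4,q)$ is connected, equivalently $n=\tfrac{q-1}{4}$ is a primitive divisor of $q-1$ --- and this is where the hypothesis $q\ne 9$ enters. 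For $p\ge 5$ one has $4p^{m-1}-4\le p^m-1$, hence $n>p^{m-1}-1\ge p^a-1>0$ for all $1\le a<m$, so $n\nmid p^a-1$ and $n$ is primitive. For $p=3$ (so $m=2t$) one has $n>3^{m-2}-1$, which settles all $a\le m-2$, and for $a=m-1$ one argues exactly as in the $p=2$ subcase of Theorem \ref{gp3q}, reducing $n\mid 3^{m-1}-1$ to $3^m-1\mid 3(3^{m-2}-1)$, impossible for $m\ge 3$. The case $m=2$, i.e. $q=9$, is genuinely exceptional: there $n=(q-1)/4=2$ divides $p-1=2$, so $n$ is not a primitive divisor, and indeed $\G(4,9)\cong 3K_3$ is disconnected --- hence its exclusion from the statement. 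Granting connectedness, part $(b)$ delivers the asserted multiplicities and finishes the proof. I expect the one real obstacle to be pinning down the weight distribution of $\CC(4,q)$ in the $p\equiv 1\pmod 4$ case in a form matching the normalization $c\equiv 1\pmod 4$, $(c,p)=1$, i.e. correctly pairing the four non-principal eigenvalues with the four sign choices $\pm 4d\sqrt[4]q$ and $\pm 2c\sqrt[4]q$; the connectedness bookkeeping and the semiprimitive case are routine adaptations of the $k=3$ reasoning.
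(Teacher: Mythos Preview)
Your proposal is correct and follows essentially the same approach as the paper's proof: apply Theorem~\ref{pesoaut} to $\CC(4,q)$, invoke the known weight distribution of that code (the paper cites Theorem~21 in \cite{DY}), and verify connectedness by showing $n=\tfrac{q-1}{4}$ is a primitive divisor of $q-1$, splitting into $p\ge 5$ and $p=3$. Your write-up is in fact more detailed than the paper's (which simply says ``similar to the proof of Theorem~\ref{gp3q}'' and treats only the primitive-divisor step), including the modular reduction of $m$, the integrality check, the observation that $c,d\ne 0$ forces five distinct eigenvalues in case~$(a)$, and the explicit identification $\G(4,9)\cong 3K_3$ explaining the excluded case.
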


\begin{proof}
	The proof is similar to the one of Theorem \ref{gp3q}. We apply Theorem \ref{pesoaut} to the code $\CC(4,q)$ since the spectrum of this code is given in Theorem 21 in \cite{DY}. Thus, we skip the details and only show that if $q\ge 5$ with $q\ne 9$, then $\frac{q-1}4$ is a primitive divisor of $q-1$ and, hence, $\G(4,q)$ is connected. 
	
	Suppose that $p\ge 5$. Then, we have that $p^{m-1}-1 \le \frac{q-1}4$ since this inequality is equivalent to $4p^{m-1}-4 \le p^m-1$, which is true because $p\ge 5$. This implies that $n$ is greater that $p^a-1$ for all $a<m$ and hence $n$ is a primitive divisor of $q-1$.
	Now, if $p=3$ we only have to check that $n$ does not divide $3^{m-1}-1$, since $n> 3^{m-2}-1$. Notice that $n\mid 3^{m-1}-1$ 
	if and only if $3^{m}-1\mid 4(3^{m-1}-1)$, which can only happen if $m=2$. If $m>2$ in this case 
	$4(3^{m-1}-1) \equiv 3^{m-1}-3 \not\equiv 0 \pmod{3^m-1}$
	as we wanted. The prime $p=2$ is excluded by hypothesis.
\end{proof}

In case $(a)$ of the previous theorem, the non-principal eigenvalues 
are of the form 
$$\lambda=\tfrac{\alpha \sqrt[4]q-1}{4} \qquad \text{with} \qquad \alpha = \sqrt q \pm 4d, \: -\sqrt q \pm 2c$$ 
where $(c,d)$ are integer solutions of $4 \sqrt[3]q = X^2+27Y^2$ with $a\equiv 1 \pmod 3$ and $(a,p)=1$. 
Also, since $\sqrt{p^{2t+2}} = p\sqrt{p^{t}}$, for $p=3$ we have the relations $$\lambda_3(\Gamma(3,3^{2t})) = \lambda_3(\Gamma(3,3^{2t+2})) \qquad \text{and} \qquad 
\lambda_2(\Gamma(3,3^{2t+2})) = \lambda_2(\Gamma(3,3^{2t+4}))$$ 
between the eigenvalues of $\Gamma(3,3^{2t})$, $\Gamma(3,3^{2t+2})$ and $\Gamma(3,3^{2t+4})$, 
where $\lambda_1 >\lambda_2 >\lambda_3$.

\begin{rem} \label{solutions eq}
We now make two observations relative to Theorems \ref{gp3q} and \ref{gp4q}. 

\noindent ($i$)
In parts $(b)$ of these theorems the graphs $\G(3,q)$ and $\G(4,q)$ are semiprimitive (see the Introduction). 
The spectrum of semiprimitive generalized Paley graphs $\G(k,q)$ was studied and computed in \cite{PV2}.
Thus, parts $(b)$ in these theorems can be obtained as particular cases of Theorem 3.3 in \cite{PV2} with $k=3,4$.

\noindent $(ii)$ By the theorems, % \ref{gp3q} and \ref{gp4q}, 
it is implicit that the equations 
$4p^{t}=X^2+27 Y^{2}$ for $p\equiv1 \pmod{3}$  and % \qquad \text{and} \qquad 
$p^{2t}=X^2+4Y^2$ for $p\equiv 1 \pmod{4}$ always have integer solutions $(x,y)$ with $(x,p)=1$, where $x\equiv 1 \pmod{3}$ for the first equation and $x\equiv 1 \pmod{4}$ for the second one. This is known from number theory results.
\end{rem}

\begin{exam}
($i$) Let $q=5^4=625$, that is $p=5$ and $m=4$. Since $p\equiv 1 \pmod 4$, we have to find integers  $c,d$ such that
	$25=c^2+4d^2$, $c\equiv 1 \pmod 4$ and $(c,5)=1$. One can check that $(c,d)=(-3,2)$ satisfy these conditions and hence by ($i$) in Theorem~\ref{gp4q}, the spectrum of $\G(4,625)$ is given by 
	$$Spec(\G(4,625)) = \big\{ [156]^1, [16]^{156}, [1]^{156}, [-4]^{156}, [-14]^{156} \big\}.$$

($ii$) Let $p=3$. By ($b$) in Theorem~\ref{gp4q} we have 
$Spec(\G(4,3^4)) = \big\{ [20]^1, [2]^{60}, [-7]^{20} \big\}$, 
$Spec(\G(4,3^6)) = \big\{ [182]^1, [20]^{182}, [-7]^{546} \big\}$ and 
$Spec(\G(4,3^8)) = \big\{ [1640]^1, [20]^{4920}, [-61]^{1640} \big\}$.  
	\hfill $\lozenge$
\end{exam}

%In $(b)$ of Theorems \ref{gp3q} and \ref{gp4q}, the graphs $\G(3,q)$ and $\G(4,q)$ are semiprimitive (see the Introduction). 
%The spectrum of semiprimitive generalized Paley graphs $\G(k,q)$ was studied and computed in \cite{PV2}.

\begin{rem} \label{ab eig}
Notice that from items ($a$) of the previous two theorems, if we denote by $\lambda_1,\ldots,\lambda_k$ the non-principal eigenvalues of $\G(k,q)$, with $k=3$ or $4$, in the order listed in the theorems, we then have that
$a= \tfrac{3\lambda_1+1}{p^t}$, $b= \tfrac{\lambda_3 -\lambda_2}{3p^t}$,
and $c= \tfrac{\lambda_3 - \lambda_4}{p^t}$, $d= \tfrac{\lambda_1 -\lambda_2}{2p^t}$, 
respectively.
\end{rem}

We now give the spectrum of the sum GP-graphs $\G^+(3,q)$ and $\G^+(4,q)$ for $q$ odd, since for $q$ even $\G^+(3,q)=\G(3,q)$ and $\G^+(4,q)=\G(4,q)$. The spectra of $\G^+(3,q)$ and $\G^+(4,q)$ will be obtained from the corresponding ones of $\G(3,q)$ and $\G(4,q)$ using the same techniques as in \cite{PV3}. 

\begin{thm}%[\cite{PV4}] 
	\label{gp3q+}
	Let $q=p^{m}$ with $p$ an odd prime and $m \in \N$ such that $3\mid \frac{q-1}{p-1}$ and $q\ge 5$ and put $n=\frac{q-1}3$. 
	Thus, the graph $\G^+(3,q)$ is connected with integral %almost symmetric 
	spectrum given by:
	\begin{enumerate}[$(a)$]
		\item If $p\equiv 1 \pmod 3$ then $m=3t$ for some $t\in \N$ and 
		$$Spec(\G^+(3,q)) = \big\{ [n]^1, \big[\pm \tfrac{a\sqrt[3]{q}-1}{3}\big]^{\frac n2}, \big[\pm \tfrac{-\frac{1}{2} (a+9b)\sqrt[3]{q}-1}{3}\big]^{\frac n2}, 
		\big[\pm \tfrac{-\frac 12 (a-9b) \sqrt[3]{q}-1}{3}\big]^{\frac n2} \big\} $$
		where $a,b$ are integers uniquely determined by conditions \eqref{ab27}. \msk

		\item If $p\equiv 2 \pmod 3$ then $m=2t$ for some $t\in \N$ and 
		$$Spec(\G^+(3,q)) = \begin{cases} 
		\big\{ [n]^1, \big[\pm \tfrac{\sqrt{q}-1}{3}\big]^{n}, \big[\pm \tfrac{-2\sqrt{q}-1}{3}\big]^{\frac n2} \big\} & 
		\qquad \text{for $m\equiv 0 \pmod 4$}, \\[2mm]
		\big\{ [n]^1, \big[\pm \tfrac{2\sqrt{q}-1}{3}\big]^{\frac n2}, \big[\pm \tfrac{-\sqrt{q}-1}{3}\big]^{n} \big\} & 
		\qquad \text{for $m\equiv 2 \pmod 4$}.
		\end{cases}$$
	\end{enumerate}
\end{thm}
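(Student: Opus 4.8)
The plan is to obtain $Spec(\G^+(3,q))$ directly from $Spec(\G(3,q))$ (Theorem \ref{gp3q}) by means of the general relationship between the spectrum of a Cayley graph and that of the associated Cayley sum graph over a finite abelian group, exactly as in \cite{PV3}. First I would record that $S=R_3$ is a symmetric subset of $\ff_q$ not containing $0$ (symmetric since $-1=(-1)^3\in R_3$), so $\G(3,q)=X(\ff_q,R_3)$ and $\G^+(3,q)=X^+(\ff_q,R_3)$ both fit the framework; recall that for $q$ odd the graph $\G^+(3,q)$ has loops, but its adjacency matrix is nonetheless symmetric, so it still has a real (indeed, as we will see, integral) spectrum.

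The key identity is $A^+=AP$, where $A$ and $A^+$ are the adjacency matrices of $\G(3,q)$ and $\G^+(3,q)$ and $P$ is the permutation matrix of negation $x\mapsto -x$ on $\ff_q$; this holds precisely because $R_3=-R_3$. Since negation normalizes the translation action of $\ff_q$ and $R_3$ is symmetric, $A$ and $P$ commute, hence are simultaneously diagonalizable, and $P^2=I$. Writing $\C^{\ff_q}=W_+\oplus W_-$ for the $(\pm 1)$-eigenspaces of $P$, which are $A$-invariant, we get $A^+=A$ on $W_+$ and $A^+=-A$ on $W_-$, so $Spec(\G^+(3,q))=Spec(A|_{W_+})\cup\big(-Spec(A|_{W_-})\big)$. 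To compute the two pieces I would use the additive characters $\psi_\gamma$ of $\ff_q$ as the common eigenbasis: $A\psi_\gamma=\lambda_\gamma\psi_\gamma$ with $\lambda_{-\gamma}=\lambda_\gamma$, and $P\psi_\gamma=\psi_{-\gamma}$. Since $q$ is odd, negation fixes only $0$ and is fixed-point-free on $\ff_q^*$; thus $\psi_0=\mathbf 1\in W_+$ contributes the principal eigenvalue $[n]^1$ unchanged, while for each pair $\{\gamma,-\gamma\}$ with $\gamma\ne 0$ the vectors $\psi_\gamma+\psi_{-\gamma}\in W_+$ and $\psi_\gamma-\psi_{-\gamma}\in W_-$ both carry the eigenvalue $\lambda_\gamma$ of $A$. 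Consequently each non-principal eigenvalue $\mu$ of $\G(3,q)$ of multiplicity $m_\mu$ splits off $[\mu]^{m_\mu/2}$ from $W_+$ and $[-\mu]^{m_\mu/2}$ from $W_-$, which is exactly the claimed $\pm$-folding (and no accidental collisions occur, since the listed non-principal eigenvalues of $\G(3,q)$ are distinct and none is the negative of another).

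For the halved multiplicities to be integers I would check the parity of $n$: since $p$ is odd and $3\mid\frac{q-1}{p-1}$ one has $q\equiv 1\pmod 6$, hence $n=\frac{q-1}{3}$ is even (and so is $2n$), which is precisely what makes the multiplicities $n$ and $2n$ occurring in Theorem \ref{gp3q} fold to $\frac n2$ and $n$. Substituting the explicit eigenvalues and multiplicities of Theorem \ref{gp3q}(a) and (b) — together with the implied $m=3t$ resp. $m=2t$ — into this folding rule then gives the spectra in (a) and (b), with integrality inherited from $\G(3,q)$. Finally, $\G^+(3,q)$ is connected: the principal eigenvalue $n$ remains simple (only the direction $\psi_0\in W_+$ carries it), equivalently $R_3$ generates $\ff_q$ additively, which is the same condition as connectivity of $\G(3,q)$ established in Theorem \ref{gp3q}; for Cayley sum graphs this equivalence is part of the setup of \cite{PV3}. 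The only points that take a little care are the fixed-point-freeness of negation for $q$ odd (so the $\pm$-splitting is clean, with no surviving "quadratic" characters) and the parity bookkeeping for $n$; the rest is a routine transcription of Theorem \ref{gp3q} through the spectral correspondence of \cite{PV3}.
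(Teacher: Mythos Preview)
Your proposal is correct and follows essentially the same approach as the paper. The paper's proof is a one-line citation of Proposition~2.10 in \cite{PV3} together with Theorem~\ref{gp3q}, noting that $\ff_q$ has no non-trivial real characters for $q$ odd; your argument simply unpacks the content of that proposition --- the identity $A^+=AP$, the commuting of $A$ and $P$, and the resulting $\pm$-folding of each non-principal eigenvalue into halves of its multiplicity --- and verifies the side conditions (parity of $n$, simplicity of the principal eigenvalue) explicitly.
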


\begin{proof}
	It is a direct consequence of Proposition 2.10 in \cite{PV3} and Theorem \ref{gp3q}, 
	since $\ff_q$ has no trivial real characters for $q$ odd.
\end{proof}

In the same way, we have the following result.
\begin{thm}%[\cite{PV4}] 
	\label{gp4q+}
	Let $q=p^m$ with $p$ an odd prime and $m \in \N$ such that $4\mid \frac{q-1}{p-1} $ and $q\ge 5$ with $q\ne 9$ and put $n=\frac{q-1}4$.
	Thus, the graph $\G^+(4,q)$ is connected with integral 
	spectrum given by:
	\begin{enumerate}[$(a)$]
		\item If $p\equiv 1 \pmod 4$ then $m=4t$ for some $t\in \N$ and
		\small{$$Spec(\G^+(4,q)) = \big\{ [n]^1, \big[\pm \tfrac{\sqrt{q} + 4d\sqrt[4]{q}-1}{4}\big]^{\frac n2}, \big[\pm \tfrac{\sqrt{q} - 4d\sqrt[4]{q}-1}{4}\big]^{\frac n2}, \big[\pm \tfrac{-\sqrt{q} + 2c \sqrt[4]{q}-1}{4}\big]^{\frac n2}, 
			\big[\pm \tfrac{-\sqrt{q} - 2c \sqrt[4]{q}-1}{4}\big]^{\frac n2} \big\}$$}
		where $c,d$ are integers uniquely determined by conditions \eqref{c4d}. \msk

		\item If $p\equiv 3 \pmod 4$ then $m=2t$ for some $t\in \N$ and 
		$$Spec(\G^+(4,q)) = \begin{cases} 
		\big\{ [n]^1, \big[\pm \tfrac{\sqrt{q}-1}{4}\big]^{\frac{3n}2}, \big[\mp \tfrac{3\sqrt{q}+1}{4}\big]^{\frac n2} \big \} & 
		\qquad \text{for $m\equiv 0 \pmod 4$}, \\[2mm]
		\big\{ [n]^1, \big[\mp \tfrac{\sqrt{q}+1}{4}\big]^{\frac{3n}2}, \big[\pm \tfrac{3\sqrt{q}-1}{4}\big]^{\frac n2} \big \} & 
		\qquad \text{for $m\equiv 2 \pmod 4$}.
		\end{cases}$$
	\end{enumerate}
\end{thm}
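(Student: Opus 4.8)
The plan is to mirror the argument for Theorem \ref{gp3q+}, deducing the spectrum of $\G^+(4,q)$ directly from that of $\G(4,q)$ given in Theorem \ref{gp4q}. The tool is Proposition 2.10 in \cite{PV3}, which relates the spectrum of a Cayley sum graph $X^+(G,S)$ to that of the Cayley graph $X(G,S)$ when $S$ is symmetric: each eigenvalue $\lambda_\chi$ of $X(G,S)$ attached to a character $\chi$ gives rise either to $\pm\lambda_\chi$ (each with half the original multiplicity) when $\chi$ is not real, or stays as $\pm\lambda_\chi$ according to sign when $\chi$ is a nontrivial real character. Since $q$ is odd, $\ff_q$ (as an additive group) has no nontrivial real characters — the only element of order dividing $2$ is $0$ — so every nonprincipal eigenvalue of $\G(4,q)$ splits symmetrically into a $\pm$ pair, each inheriting half the multiplicity, while the principal eigenvalue $n$ is preserved with multiplicity $1$. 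This is exactly what is claimed.

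First I would record the hypotheses needed to invoke Theorem \ref{gp4q}: $q=p^m\ge 5$, $q\ne 9$, $4\mid\frac{q-1}{p-1}$, which are inherited verbatim (now with $p$ odd). This gives connectedness of $\G(4,q)$ and its integral spectrum, and in particular forces $m=4t$ when $p\equiv 1\pmod 4$ and $m=2t$ when $p\equiv 3\pmod 4$; these congruence conclusions carry over unchanged to $\G^+(4,q)$. Next I would quote Proposition 2.10 of \cite{PV3} and observe that for $q$ odd the set $R_4$ is symmetric and $0\notin R_4$, so the proposition applies, and that the additive group $\ff_q$ has no nontrivial real characters. Then I would simply transcribe: the eigenvalue $n$ of $\G(4,q)$ (multiplicity $1$, principal) remains $n$ with multiplicity $1$ in $\G^+(4,q)$ — connectedness of $\G^+(4,q)$ follows from this — and each of the four nonprincipal eigenvalues $\lambda$ of $\G(4,q)$, of multiplicity $n$, becomes the pair $\pm\lambda$ each of multiplicity $\frac n2$. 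Substituting the explicit values from Theorem \ref{gp4q}(a) gives part (a); substituting from Theorem \ref{gp4q}(b) gives part (b), where in writing $\mp\frac{3\sqrt q+1}{4}$ and $\mp\frac{\sqrt q+1}{4}$ I am just rewriting $\pm\frac{-3\sqrt q-1}{4}$ and $\pm\frac{-\sqrt q-1}{4}$ to display the eigenvalues with a leading minus sign on the dominant term.

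There is essentially no obstacle here beyond bookkeeping: the content is entirely in Theorem \ref{gp4q} and in Proposition 2.10 of \cite{PV3}. The only point requiring a word of care is the parity of the multiplicities $n$ and $\frac n2$: one should note that $n=\frac{q-1}4$ is even whenever $q$ is odd, because then $8\mid q-1$ is not automatic but $4\mid q-1$ already gives $n=\frac{q-1}{4}$ an integer, and $q$ odd forces $q-1\equiv 0\pmod 2$, hence... in fact the cleaner remark is that $\frac n2$ being the stated multiplicity is guaranteed by Proposition 2.10 itself, which produces half-multiplicities precisely because non-real characters come in conjugate pairs; so integrality of $\frac n2$ is part of the input, not something to re-prove. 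Thus the proof is a two-line deduction, exactly as for Theorem \ref{gp3q+}.
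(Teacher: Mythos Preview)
Your proposal is correct and follows exactly the paper's approach: the paper's proof is the single line ``Again we use Proposition 2.10 in \cite{PV3} and Theorem \ref{gp3q}'' (the reference to \ref{gp3q} is evidently a typo for \ref{gp4q}), and you have simply unpacked what that citation entails. Your additional remarks on symmetry of $R_4$, the absence of nontrivial real characters for $q$ odd, and the integrality of $\tfrac n2$ are all sound elaborations of the same argument.
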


\begin{proof}
Again we use Proposition 2.10 in \cite{PV3} and Theorem \ref{gp3q}. 
\end{proof}

As in Remark \ref{ab eig}, we can put the integers $a,b,c,d$ appearing in Theorems \ref{gp3q+} and \ref{gp4q+} in term of the eigenvalues.

Of course, from the spectrum of $\G=\G(k,q)$ for $k=3,4$ one can obtain the spectrum of the complementary graph $\bar \G$ and the spectra  of the associated line graphs $\mathcal{L}(\G)$, $\mathcal{L}(\bar \G)$ and of their complements $\overline{\mathcal{L}(\G)}$ and
 $\overline{\mathcal{L}(\bar \G)}$.  

We finish the section with some comments on Theorems \ref{gp3q}, \ref{gp4q}, \ref{gp3q+} and \ref{gp4q+} 
	%the theorems of the section.
which will be referred to simply as `the theorems'. 
\begin{rem} \label{equinoiso} %\label{rem gp34q}
%We will refer to  simple as .
\noindent ($i$) The integers $b$ and $d$ in `the theorems' are determined up to sign. However, by symmetry, 
the eigenvalues in these theorems are not affected by these choices of sign.  
%\end{rem}

\noindent ($ii$) From `the theorems' we know that all the graphs $\G^*(3,q)$, $\G^*(4,q)$ are connected and non-bipartite. The semiprimitive graphs $\G(k,q)$ with $k=3,4$, i.e.\@ those in items ($b$) of Theorems~\ref{gp3q} and \ref{gp4q}, are strongly regular while the non-semiprimitive graphs (all other graphs in these four theorems) are non strongly regular. 

%\begin{rem} 
\noindent ($iii$) By `the theorems', 
$\{\G(3,q),\G^+(3,q)\}$ and $\{\G(4,q),\G^+(4,q)\}$ are pairs of equienergetic non-isospectral graphs 
%(by Theorems \ref{gp3q}--\ref{gp4q+}), 
which are non bipartite. In the non semiprimitive case ($q\equiv 1 \pmod k$ with $k=3,4$) 
they are also non strongly regular graphs.
\end{rem}

\section{Derived spectrum on field extensions} 
In this section we show that, under certain mild hypothesis, 
we can give the spectrum of $\G(k,p^{k\ell})$ for $k=3,4$, with $p$ any prime number congruent to 1 mod $k$ and $\ell>1$, in the non-semiprimitive case (i.e.\@ for $p\equiv 1 \pmod k$) from the spectra of $\G(k,p^{k})$.
More precisely, by ($a$) in Theorem \ref{gp3q}, if $q=p^{3\ell}$ and $p\equiv 1 \pmod 3$ then the eigenvalues of $\G(3,q)$ and $\G(3,q^n)$ are respectively given in terms of certain integer solutions $(a,b)$ and $(a_n,b_n)$ of the equations 
$$4q=X^2+27Y^2 \qquad \text{and} \qquad 4q^n=X^2+27Y^2.$$ 
We will show that we can recursively obtain integer solutions $(a_n,b_n)$ of $4q^n=X^2+27Y^2$ of the required form in infinitely many field extensions $\ff_{q^n}$ of $\ff_q$ from an initial integer solution $(a,b)$ of $q=x^2+27y^2$ of the required form. Similarly for $\G(4,q)$ and $\G(4,q^n)$ by using Theorem \ref{gp4q}, although this case is much easier.  

For $k=3$, we can give a more general result. Equations of the form $4p^{3t}=X^2+27Y^2$ always have integers solutions (see Remark \ref{solutions eq}), but nothing can be said about the solutions of $p^{3t}=X^2+27Y^2$. However, assuming that there is a minimal $t$ such that $p^{3t}=X^2+27Y^2$ has an integer solution we can provide the spectrum of $\G(3,p^{3(t\ell+s)})$ for any $\ell \in \N$ and $0\le s<t$.

\begin{thm} \label{spectra der 3 gen}
Let $p$ be a prime with $p\equiv 1 \pmod{3}$.
If there is a minimal $t\in \N$ such that %the equation 
\begin{equation} \label{eq pt=x227y2}
p^t=X^2+27 Y^2
\end{equation}
has integral solutions $x,y \in \Z$ with $(x,p)=1$, 
then the spectra of $\G(3,p^{3(t\ell+s)})$, with $\ell\ge 1$ and $0\le s<t$, 
is determined by the spectra of the GP-graphs $\G(3,p^{3t})$ and $\G(3,p^{3s})$.
\end{thm}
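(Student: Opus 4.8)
The plan is to exploit the multiplicativity of the quadratic form $X^2+27Y^2$ together with the explicit eigenvalue formulas in part $(a)$ of Theorem~\ref{gp3q}. The key algebraic fact is that the set of integers represented by $X^2 + 27Y^2$ is closed under multiplication: this comes from the norm form of the order $\Z[\sqrt{-27}]$ (or, better, of $\Z[\omega]$ via $x^2+27y^2 = (x+3\sqrt{-3}y)(x-3\sqrt{-3}y)$ and the identity $4(x^2+27y^2) = (a)^2 + 27(b)^2$ packaging used in \cite{DY}). Concretely, if $p^{t} = x^2 + 27 y^2$ with $(x,p)=1$ and $p^{3s} = u^2 + 27 v^2$ with $(u,p)=1$ — the latter being guaranteed to exist with $u\equiv 1\pmod 3$ by Theorem~\ref{gp3q}$(a)$ applied to $\G(3,p^{3s})$, since $3s$ is a multiple of $3$ — then by the composition law $p^{t\ell + 3s} = p^{t\cdot(\ell) }\cdot p^{3s}$ is again represented, and one obtains an explicit solution $(a_{\ell,s}, b_{\ell,s})$ by iterating the Gaussian-type identity $\ell$ times on $(x,y)$ and composing once with $(u,v)$. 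One must then renormalize so that $a_{\ell,s}\equiv 1 \pmod 3$ (possible since if $x^2+27y^2 \equiv 1 \pmod 3$ forces $x\not\equiv 0$, and replacing $x\mapsto -x$ toggles the residue mod $3$; the value of $x^2$ and hence of $b$ up to sign is unchanged, which by Remark~\ref{equinoiso}$(i)$ does not affect the spectrum). Finally one checks $(a_{\ell,s}, p)=1$: since $p\nmid x$ and $p\nmid u$, the product identity keeps the first coordinate coprime to $p$ provided one chooses the "primitive" branch of the composition — this is exactly the standard argument that $p$ splits (not ramifies) in $\Z[\omega]$, so its prime-above powers have coprime-to-$p$ real and imaginary parts in the appropriate sense.

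First I would make precise the composition identity: writing $\pi = \tfrac{x + 3\sqrt{-3}\,y}{1}$ so that $\pi\bar\pi = p^t$, set $\pi_\ell = \pi^\ell \cdot \rho$ where $\rho\bar\rho = p^{3s}$ realizes $\G(3,p^{3s})$'s solution; then $\pi_\ell \bar\pi_\ell = p^{t\ell + 3s} = p^{3(t\ell' + s')}\cdot(\text{unit adjustments})$ — here I need $t\ell + 3s$ to actually be $3$ times the exponent $t\ell + s$ appearing in the target graph $\G(3, p^{3(t\ell+s)})$, so I should double-check the bookkeeping: the target field is $\ff_{p^{3(t\ell+s)}}$, whose defining equation is $4 p^{t\ell+s} = X^2 + 27Y^2$, cube-root notation aside. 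So really I want a representation of $p^{t\ell + s}$, obtained as $\pi^\ell \cdot \sigma$ with $\sigma\bar\sigma = p^s$ — but $p^s$ need not be represented for $0 < s < t$ by minimality of $t$! This is the crux, and it forces the use of the \emph{pair} of base data $\G(3,p^{3t})$ \emph{and} $\G(3,p^{3s})$: from $\G(3,p^{3s})$ we read off (Remark~\ref{ab eig}) integers $a^{(s)}, b^{(s)}$ with $4 p^s = (a^{(s)})^2 + 27 (b^{(s)})^2$, equivalently a representation of $4p^s$, and from $\G(3,p^{3t})$ a representation of $4p^t$; composing $\ell$ copies of the latter with one of the former and dividing by the appropriate power of $4$ (using that $4 = 1^2 + 27\cdot 0^2$ has trivial-looking but compatible composition, or rather tracking the factor $2$ carefully as in the derivation of \eqref{ab27}) yields a representation of $4 p^{t\ell + s}$ of the form $a_{\ell,s}^2 + 27 b_{\ell,s}^2$. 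Then plugging $\sqrt[3]{q} = p^{t\ell+s}$ and this $(a_{\ell,s}, b_{\ell,s})$ into the formula of Theorem~\ref{gp3q}$(a)$ gives the spectrum of $\G(3, p^{3(t\ell+s)})$ explicitly.

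I would organize the write-up as: (1) state the composition lemma for $4p^{\bullet} = X^2 + 27Y^2$ with control of the first coordinate mod $p$ and mod $3$; (2) apply it with the base representations coming from $\G(3,p^{3t})$ and $\G(3,p^{3s})$ via Remark~\ref{ab eig}; (3) verify the normalizations $a_{\ell,s}\equiv 1 \pmod 3$ and $(a_{\ell,s},p)=1$; (4) invoke Theorem~\ref{gp3q}$(a)$ to conclude. The main obstacle is step (3), specifically the coprimality $(a_{\ell,s}, p) = 1$: a naive composition can introduce a factor of $p$ when the "wrong" conjugate is multiplied (the phenomenon $\pi\bar\pi = p^t$ versus $\pi^2$, where $\pi\bar\pi$ is divisible by $p$). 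The fix is to always compose $\pi$ with $\pi$ (never $\pi$ with $\bar\pi$), i.e.\ use $\pi^\ell \sigma$ rather than any mixed product, and to observe that $p\nmid x$ and $p\nmid a^{(s)}$ imply $p \nmid$ the real part of $\pi^\ell \sigma$ by reducing mod $p$ inside $\Z[\omega]/(p) \cong \ff_p \times \ff_p$ (here $p\equiv 1\pmod 3$ guarantees the splitting, so $\pi, \sigma$ map to units in each factor and so does their product). A secondary, purely bookkeeping obstacle is correctly propagating the factor of $4$ and the mod-$3$ normalization through $\ell+1$ compositions; this is routine once the $\ell=1$ case is checked against the known identities in the proof of Theorem~\ref{gp3q}.
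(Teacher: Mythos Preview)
Your overall strategy coincides with the paper's: both arguments exploit the multiplicativity of $X^2+27Y^2$ by writing $z_{x,y}=x+3\sqrt{-3}\,y$ (your $\pi$), taking powers $z_{x,y}^{\ell}$ to get representations of $p^{t\ell}$, and composing once more with the element $z_{a_{0,s},b_{0,s}}$ read off from $\G(3,p^{3s})$ (via $4p^s=a_{0,s}^2+27b_{0,s}^2$) to obtain the $(a_{\ell,s},b_{\ell,s})$ required by Theorem~\ref{gp3q}$(a)$. Where you differ is in the verification of the crucial condition $(a_{\ell,s},p)=1$: the paper proves this by a lengthy bare-hands induction on the explicit recursion \eqref{recursion t1}, culminating in two contradiction arguments (Claims~1 and~2) that juggle the identities $x_\ell = x_0\sum x_ix_0^{\ell-1-i}-p^t\sum x_ix_0^{\ell-2-i}-27y_0^2x_0^{\ell-1}$ and a $p$-adic valuation analysis. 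Your proposed shortcut via $\Z[\omega]/(p)\cong \ff_p\times\ff_p$ is more conceptual and, once corrected, genuinely more efficient.

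However, the shortcut as you state it has a gap. You assert that ``$\pi,\sigma$ map to units in each factor'' of $\ff_p\times\ff_p$, but this is false: since $\pi\bar\pi=p^t$ reduces to $0$, the image of $\pi$ is a \emph{zero-divisor}. What the hypothesis $p\nmid x$ actually gives you is that $\pi$ maps to $(u,0)$ or $(0,u)$ with $u\ne 0$ (because $2x=\pi+\bar\pi$ maps to the sum of the two coordinates, which must be nonzero). Likewise $\sigma$ maps to $(v,0)$ or $(0,v)$. If $\pi\mapsto(u,0)$ and $\sigma\mapsto(0,v)$ then $\pi^\ell\sigma\mapsto(0,0)$, so $p\mid a_{\ell,s}$ --- precisely the disaster you were trying to avoid. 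The fix is easy but must be made explicit: replace $\sigma$ by $\bar\sigma$ if necessary so that $\pi$ and $\sigma$ vanish in the \emph{same} factor; this flips the sign of $b_{0,s}$, which by Remark~\ref{equinoiso}$(i)$ does not affect the spectrum. With that adjustment, $\pi^\ell\sigma\mapsto(u^\ell v,0)$ and the real part $\tfrac12(\pi^\ell\sigma+\overline{\pi^\ell\sigma})$ reduces to $u^\ell v/2\ne 0$ in $\ff_p$, yielding $(a_{\ell,s},p)=1$ in one line rather than the paper's two pages. Your handling of the $\bmod\,3$ normalisation and of the factor of $4$ is fine.
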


\begin{proof}
Let $t$ be minimal in $\N$ such that \eqref{eq pt=x227y2} has an integral solution $x,y$ with $(x,p)=1$. Notice that if $(x,y)$ is a solution of \eqref{eq pt=x227y2} then $(x,-y)$ and $(-x, \pm y)$ are also solutions. Also, from \eqref{eq pt=x227y2} we have that $x^2\equiv 1 \pmod 3$ since $p\equiv 1 \pmod 3$ and hence $x\equiv \pm 1 \pmod 3$. Thus, we will choose one solution $(x_0,y_0)$, with $x_0 \in \{\pm x\}$ and $y_0 \in \{\pm y\}$, such that $x_0\equiv 1 \pmod 3$.

Considering the complex number 
\begin{equation} \label{zxy}
z_{x,y}:=x + 3\sqrt{3} i y, 
\end{equation}
we have that $\| z_{x,y} \|^2 = x^2+27y^2 = p^t$ and hence 
\begin{equation} \label{ptl}
p^{t\ell} = \|z_{x,y}\|^{2 \ell} = \|z_{x,y}^{\ell}\|^2
\end{equation}
for any $\ell \in \N$. Now, we will express $z_{x,y}^\ell$ in the form given in \eqref{zxy}. 
For any $\ell \in \N$ put 
$$z_{x,y}^{\ell} := z_{x_{\ell-1},y_{\ell-1}} = x_{\ell-1}+ 3\sqrt{3}i y_{\ell-1}$$
where $ z_{x,y}^1=z_{x,y}$ and $x_0=x$, $y_0=y$.
For instance, $x_1+3\sqrt 3 i y_1 = z_{x,y}^2 = (x^2-27y^2)+3\sqrt 3 i (2xy)$ so $x_1=x^2-27y^2$ and $y_1=2xy$.
By the relation $z_{x,y}^{\ell+1} = z_{x,y}z_{x,y}^\ell$, one sees that
the sequence $\{(x_{\ell},y_{\ell})\}_{\ell \in \mathbb{N}_0}$ is thus recursively defined as follows: let $x_0=x$, $y_0=y$ and 
for any $\ell>0$ take 
\begin{equation} \label{recursion t1}
x_{\ell} = x_0 x_{\ell-1} - 27 y_0 y_{\ell-1} \qquad \text{and} \qquad y_{\ell} = x_0 y_{\ell-1} + x_{\ell-1}y_0.
\end{equation}

Note that if $\ell\ge1$, starting from $y_{\ell}=x_0 y_{\ell-1} + x_{\ell-1}y_0$ and 
changing the $y_j$'s recursively we obtain the following equality
\begin{equation}\label{y ies t2}
y_{\ell}= y_0 \big(x_0^\ell + \sum_{i=0}^{\ell-1} x_{i} x_{0}^{\ell-1-i} \big).
\end{equation}
So, from the second equation in \eqref{recursion t1} and \eqref{y ies t2} we get  
\begin{equation} \label{x ies t2}
x_{\ell-1} = \tfrac{1}{y_0}(y_{\ell}-x_0 y_{\ell-1}) = %{\blue x_0^\ell} + 
\sum_{i=0}^{\ell-1} x_{i} x_{0}^{\ell-1-i} - x_0 %\Big( x_0^{\ell-1} + 
\sum_{i=0}^{\ell-2} x_{i} x_{0}^{\ell-2-i}. % \Big).
\end{equation}
From the first equation in \eqref{recursion t1}, by using \eqref{y ies t2} and \eqref{x ies t2} and the fact that $p^t=x^{2}+27 y^2$, for all $\ell>1$ we obtain that
\begin{equation}\label{x ies t3}
x_{\ell}= x_0 \sum_{i=0}^{\ell-1} x_{i} x_{0}^{\ell-1-i}-p^t \sum_{i=0}^{\ell-2} x_{i} x_{0}^{\ell-2-i} 
- 27y_0^2x_0^{\ell-1}. 
\end{equation}

\noindent
\textit{Claim 1:} $x_{\ell} \equiv 1\pmod{3}$ and $(x_{\ell},p)=1$ for all $\ell\in \mathbb{N}_0$.

\smallskip

\noindent
\textit{Proof of the claim:} Clearly, $x_0\equiv 1\pmod 3$ since we choose $x$ satisfying this property. 
From the first equation in \eqref{recursion t1} we have that $x_\ell \equiv x_{\ell-1} \pmod 3$ and hence the first statement 
follows by induction. 

On the other hand, we have that $(x_0,p)=1$ by hypothesis.
Notice that $x_1=x^2-27 y^2$ and $y_1=2xy$. 
By taking into account that $p^t=x^2+27 y^2$, 
we obtain that $x_1=2 x^2 -p^t\equiv 2x^2 \pmod{p}$. 
Since $p>3$ is prime and $(x,p)=1$, we obtain that $x_1\not\equiv 0\pmod{p}$  and thus $(x_1,p)=1$.
We now prove that $(x_\ell,p)=1$ for any $\ell \ge 2$ by contradiction.

Suppose that the second statement of the claim is false, so there exists a minimum $L>1$ such that $p\mid x_L$, that is $x_{L}\equiv 0 \pmod{p}$. 
By \eqref{x ies t3}, we obtain that 
$$x_0 \sum_{i=0}^{L-1} x_{i} x_{0}^{L-1-i} - 27y_0^2x_0^{L-1} \equiv x_{L} \equiv 0\pmod{p},$$
and using that $27y_0^2= p^t-x_0^2$ we get
$$x_0 \sum_{i=0}^{L-1} x_{i} x_{0}^{L-1-i} + x_0^{L+1} = x_0 \big(x_0^L + \sum_{i=0}^{L-1} x_{i} x_{0}^{L-1-i} \big) 
\equiv 0\pmod{p}.$$
Since $(x_0,p)=1$, we have that 
\begin{equation} \label{cong 1}
x_0^L + \sum\limits_{i=0}^{L-1} x_{i} x_{0}^{L-1-i}\equiv 0\pmod{p}.
\end{equation}
Notice that 
$$\sum_{i=0}^{L-1} x_{i} x_{0}^{L-1-i}= x_{L-1}+\sum_{i=0}^{L-2} x_{i} x_{0}^{L-1-i}.$$
By applying \eqref{x ies t3} with $\ell=L-1$ we arrive at
$$x_{L-1}\equiv x_0 \sum_{i=0}^{L-2} x_{i} x_{0}^{L-2-i} - 27y_0^2x_0^{L-2} \equiv \sum_{i=0}^{L-2} x_{i} x_{0}^{L-1-i} +  x_{0}^{L} \pmod{p}$$
where we again used that $27y_0^2 = p^t-x_0^2$.
Thus, we have that
$$2 x_{L-1}\equiv x_{L-1} + \sum_{i=0}^{L-2} x_{i} x_{0}^{L-1-i} + x_{0}^{L} \equiv x_{0}^{L} + \sum_{i=0}^{L-1} x_{i} x_{0}^{L-1-i} \equiv 0\pmod{p},$$
by \eqref{cong 1}. Hence $x_{L-1}\equiv 0 \pmod{p}$ since $(2,p)=1$, which contradicts the minimality of $L$. Therefore $(x_{\ell},p)=1$ for all $\ell\in \mathbb{N}_0$. This proves the claim. \hfill $\diamond$

Notice that by \eqref{ptl} and Claim 1, we have obtained a double sequence of integers $\{(x_{\ell},y_{\ell})\}_{\ell\in \mathbb{N}_0}$ such that
$$p^{t(\ell+1)}= x_{\ell}^2+27 y_{\ell}^2 \qquad \text{with} \qquad  
x_\ell\equiv 1\pmod{3} \quad \text{and} \quad (x_{\ell},p)=1.$$
We now seek for solutions of the equation $4p^{t\ell}=X^2+27Y^2$. Since $p>3$, by defining 
$$a_{\ell,0}=-2x_{\ell-1} \qquad \text{and} \qquad b_{\ell,0}=-2 y_{\ell-1}$$ 
for $\ell>0$ we get a sequence of integers $\{(a_{\ell,0},b_{\ell,0})\}_{\ell\in\mathbb{N}}$ satisfying
$$4 p^{t\ell}= a_{\ell,0}^{2}+27 b_{\ell,0}^2 \qquad \text{with} \qquad (a_{\ell,0},p)=1 \quad \text{and} \quad  a_{\ell,0} \equiv 1\pmod{3},$$
which are just the conditions \eqref{ab27} in Theorem \ref{gp3q}. Therefore, if we put $n_{\ell,0}=\frac{p^{3t\ell}-1}{3}$, then the spectrum of $\G(3,p^{3t\ell})$ is given by
\begin{equation} \label{spec p3tl}
\mathrm{Spec}\,\G(3,p^{3t\ell})=\big\{ [n_{\ell,0}]^1, \big[ \tfrac{a_{\ell,0} p^{t\ell}-1}{3}\big]^{n_{\ell,0}}, \big[ \tfrac{-\frac{1}{2} (a_{\ell,0}+9b_{\ell,0})p^{t\ell}-1}{3}\big]^{n_{\ell,0}}, 
\big[ \tfrac{-\frac 12 (a_{\ell,0}-9b_{\ell,0}) p^{t\ell}-1}{3}\big]^{n_{\ell,0}} \big\}.
\end{equation}
Moreover, the sequence $\{(a_{\ell,0},b_{\ell,0})\}_{\ell\in\mathbb{N}}$ satisfies the recursions
\begin{equation} \label{recursions}
a_{\ell+1,0}=x_0 a_{\ell,0}-27 y_0 b_{\ell,0} \qquad \text{and} \qquad b_{\ell+1,0}=x_{0}b_{\ell,0}+y_0 a_{\ell,0}.
\end{equation}
This implies that the spectrum of $\G(3,p^{3t(\ell+1)})$ can be determined by the spectrum of $\G(3,p^{3t\ell})$, recursively.
Thus, the spectrum of $\G(3,p^{3t\ell})$ is determined by the spectrum of $\G(3,p^{3t})$ by induction, as desired.

\medskip
	
Now assume that $s\in \{1,\ldots,t-1\}$ (the case $s=0$ was treated before), and let $a_{0,s},b_{0,s}\in \mathbb{Z}$ with $a_{0,s}\equiv 1\pmod{3}$ and $(a_{0,s},p)=1$ 
such that 
$$4p^s = a_{0,s}^2+27b_{0,s}^2 = \| z_{a_{0,s},b_{0,s}} \|^2$$
with $z_{a_{0,s},b_{0,s}} = a_{0,s} + 3\sqrt{3} i b_{0,s}$. Hence, we have that
$$4p^{t\ell+s} =\|z_{a_{0,s},b_{0,s}} \|^2 \|z_{x_{\ell-1}, y_{\ell-1}} \|^2 = \|z_{a_{0,s},b_{0,s}}  z_{x_{\ell-1},y_{\ell-1}} \|^2 = \|z_{a_{\ell,s},b_{\ell,s}} \|^2$$
where $\{(a_{\ell,s},b_{\ell,s})\}_{\ell\in\mathbb{N}_0}$ also satisfies the recursions
\begin{equation}\label{aies}
	a_{\ell,s}=a_{0,s} x_{\ell-1}-27 b_{0,s} y_{\ell-1} \qquad \text{and}\qquad b_{\ell,s}=a_{0,s} y_{\ell-1}+b_{0,s} x_{\ell-1},
\end{equation}
with $x_{\ell},y_{\ell}$ recursively defined as in \eqref{recursion t1}.

\medskip

\noindent
\textit{Claim 2:} $a_{\ell,s}\equiv 1\pmod{3}$ and $(a_{\ell,s},p)=1$ for all $\ell\in \mathbb{N}_0$.

\smallskip

\noindent
\textit{Proof of the claim:}
For simplicity, here we put $a_{\ell}$ and $b_{\ell}$ instead of $a_{\ell,s}$ and $b_{\ell,s}$, respectively.
Clearly $a_{\ell}\equiv 1\pmod{3}$. On the other hand, 
if $(a_{\ell},p)>1$ then we have that $p\mid b_{\ell}$, 
so there are two cases: $v_{p}(a_\ell)=v_{p}(b_{\ell})$ or $v_{p}(a_\ell)\neq v_{p}(b_{\ell})$. 
Suppose first that $v_{p}(a_\ell)\neq v_{p}(b_{\ell})$ for some $\ell\geq 1$. 
Thus, since the numbers $a_0,b_0,x_{\ell-1}$ are mutually coprime with $p$ and $p>3$, 
we have that $v_p(a_\ell)=v_p(a_0 a_\ell) \ne v_p(27 b_0 b_\ell) = v_p(b_\ell)$ and similarly $v_p(a_\ell)=v_p( x_{\ell-1} a_{\ell}) \ne v_p(27 y_{\ell-1} b_{\ell})=v_p(b_\ell)$.
In this way we get 
\begin{align*}
& \min \{ v_p(a_\ell), v_p(b_\ell) \} = v_p( a_0 a_\ell + 27 b_0 b_\ell) = v_{p}(4 x_{\ell-1}p^{s}) = s, \\[1mm]
& \min \{ v_p(a_\ell), v_p(b_\ell) \} = v_p( x_{\ell-1} a_{\ell}+27 y_{\ell-1} b_{\ell}) = v_{p}(a_{0}p^{\ell t})=\ell t.
\end{align*}
Thus $s=\ell t$, which is absurd 
since $s<t$. 
Hence, we must have that $v_{p}(a_{\ell})=v_{p}(b_\ell)$ for all $\ell\ge 1$.

On the other hand, from the recursions \eqref{recursion t1} and \eqref{aies} we have that
\begin{equation}\label{aies 2}
	a_{\ell}=x_0 a_{\ell-1}-27 y_0 b_{\ell-1} \qquad \text{and}\qquad b_{\ell}=x_0 b_{\ell-1}+y_0 a_{\ell-1}.
\end{equation}
Combining both recursions we get 
$$a_{\ell+1}=x_0 a_{\ell}-27 y_0 (x_0 b_{\ell-1}+y_0 a_{\ell-1})=x_0 a_{\ell}-27 y_0^2  a_{\ell-1} -27 x_0 y_0 b_{\ell-1},$$
and, by using that $p^t=x_0^2+27 y_0^2$, we arrive at
\begin{equation}\label{recursion imp a}
a_{\ell+1}=x_0 a_{\ell}-p^{t}a_{\ell-1}+x_0^2 a_{\ell-1} -27 x_0 y_0 b_{\ell-1}=2x_0a_{\ell}-p^t a_{\ell-1}.
\end{equation}
Now, recall that $v_{p}(a_\ell)\le s<t$ for all $\ell\ge 0$ and so $v_{p}(2x_0a_{\ell})\neq v_p(p^t a_{\ell-1})$. Thus, we have that
$$v_{p}(a_{\ell+1}) = \min \{v_{p}(a_{\ell}), t + v_{p}(a_{\ell-1})\}=v_{p}(a_{\ell})$$
for all $\ell\ge 1$. Hence, it is enough to see that $v_{p}(a_1)=0$. 
Let $M=v_{p}(a_1)=v_{p}(b_1)$. First notice that $M<s$. Indeed, if $a_1=p^s \hat{x}$ and $b_1=p^s \hat{y}$, thus 
$$p^{2s}(\hat{x}^2+27 \hat{y}^2)=4p^s$$
which cannot happen since $s>1$. On the other hand, by \eqref{aies 2} we have that
$$a_{1}^2=x_{0}^2 a_{0}^2-54x_0 a_0 y_0 b_0 +27^2 y_0^2 b_0^2=(p^t-27 y_0^2)(4p^s-27 b_0^2)-54x_0 a_0 y_0 b_0 +27^2 y_0^2 b_0^2$$ 
and so we obtain
$$a_{1}^2=4p^{t+s}-4\cdot 27 p^s y_0^2-27p^t b_{0}^2+27^2y_0^2 b_0^2-54x_0 a_0 y_0 b_0 +27^2 y_0^2 b_0^2.$$
Thus, using that $a_1=a_0 x_0-27 b_0 y_0$, we get
$$a_{1}^2=4p^{t+s}-4\cdot 27 p^s y_0^2-27p^t b_{0}^2-54 y_0 b_0 a_1.$$
Since $t+s,t,s,M$ are all different, in fact $M<s<t<t+s$, and $y_0,b_0$ are both coprime with $p$, we obtain that
$$2M=v_p(a_1^2)=\min \{t+s,t,s,M\}=M.$$
Hence $M=0$, as desired. So, we obtain that $(a_{\ell},p)=1$ for all $\ell \ge 1$. 
This proves the claim. \hfill $\diamond$

\smallskip

Therefore, if we put $n_{\ell,s}=\frac{p^{3(t\ell+s)}-1}{3}$ then, by Theorem \ref{gp3q}, the spectrum of $\G(3,p^{3(t\ell+s)})$ is given by
\begin{equation} \label{spec lt+s}
\mathrm{Spec}\,\G(3,p^{3(t\ell+s)}) = \big\{ [n]^1, \big[ \tfrac{a p^{t\ell+s}-1}{3}\big]^{n}, \big[ \tfrac{-\frac{1}{2} (a+9b)p^{t\ell+s}-1}{3}\big]^{n}, 
\big[ \tfrac{-\frac 12 (a-9b) p^{t\ell+s}-1}{3}\big]^{n} \big\}.
\end{equation}
where we write $n=n_{\ell,s}$, $a=a_{\ell,s}$ and $b=b_{\ell,s}$ for simplicity.

In order to prove that the spectrum of $\G(3,p^{3(t\ell+s)})$ is determined by the spectra of $\G(3,p^{3s})$ and $\G(3,p^{3t})$, 
it is enough to put $a_{\ell,s}$ and $b_{\ell,s}$ in terms of $a_{0,s},b_{0,s}$ and $x_0,y_0$.
Notice that the sequence of $b_{\ell,s}$'s satisfy the same recurrence as in \eqref{recursion imp a}, i.e.\@ 
$$b_{\ell+1,s}=2x_0 b_{\ell,s}-p^t b_{\ell-1,s}.$$
By solving this two terms linear recurrence and recalling that $a_{1,s}=a_{0,s}x_0-27 b_{0,s}y_0$ and $b_{1,s}=a_{0,s}y_0+x_0b_{0,s}$, we obtain that
\begin{equation} 
\begin{aligned} \label{rec aell}
& a_{\ell,s} = \tfrac 12 (a_{0,s}+3\sqrt{3}b_{0,s} i) (x_0+3\sqrt{3}y_0 i)^{\ell} + 
\tfrac{1}{2}(a_{0,s}-3\sqrt{3}b_{0,s} i)(x_0-3\sqrt{3}y_0 i)^{\ell}, \\
& b_{\ell,s}= \tfrac{1}{2}(b_{0,s}-\tfrac{\sqrt{3}}{9}a_{0,s} i) (x_0+3\sqrt{3}y_0 i)^{\ell}+\tfrac{1}{2}(b_{0,s}+\tfrac{\sqrt{3}}{9}a_{0,s} i)(x_0-3\sqrt{3}y_0 i)^{\ell}.
\end{aligned}
\end{equation}
In this way, for every $\ell \in \N$, $a_{\ell,s}$ and $b_{\ell,s}$ can be put in terms of $a_{0,s},b_{0,s}$ and $x_0,y_0$ only, as we wanted to show.
\end{proof}

\begin{rem} \label{rem rec}
As in the case $s>0$, from \eqref{recursions} for $s=0$ 
we have that both $\{a_{\ell,0}\}$ and $\{b_{\ell,0}\}$ satisfy the recursion
$$c_{\ell+2,0}=2x_0 c_{\ell+1,0}-p^t c_{\ell,0}$$
for any $\ell>0$. 
Finally, since $a_{2,0}=a_{1,0}x_0-27 b_{1,0}y_0$ and $b_{2,0}=a_{1,0}y_0+x_0b_{1,0}$, we obtain that \eqref{rec aell} also holds for $s=0$ in the following way
\begin{equation*}
\begin{aligned} 
	& a_{\ell,0} = \tfrac 12 (a_{1,0}+3\sqrt{3}b_{1,0} i) (x_0+3\sqrt{3}y_0 i)^{\ell-1} + 
	\tfrac{1}{2}(a_{1,0}-3\sqrt{3}b_{1,0} i)(x_0-3\sqrt{3}y_0 i)^{\ell-1}, \\
	& b_{\ell,0}= \tfrac{1}{2}(b_{1,0}-\tfrac{\sqrt{3}}{9}a_{1,0} i) (x_0+3\sqrt{3}y_0 i)^{\ell-1}+\tfrac{1}{2}(b_{1,0}+\tfrac{\sqrt{3}}{9}a_{1,0} i)(x_0-3\sqrt{3}y_0 i)^{\ell-1}.
\end{aligned}
\end{equation*}
Moreover, by taking into account that $a_{1,0}=-2x_0$ and $b_{1,0}=-2y_0$  we obtain that
\begin{equation}
	\begin{aligned} \label{rec aell0} 
		& a_{\ell,0} = - (x_0+3\sqrt{3}y_0 i)^{\ell} -(x_0-3\sqrt{3}y_0 i)^{\ell}, \\
		& b_{\ell,0} = -(y_0-\tfrac{\sqrt{3}}{9}x_0 i) (x_0+3\sqrt{3}y_0 i)^{\ell-1} - 
		(y_0+\tfrac{\sqrt{3}}{9}x_0 i)(x_0-3\sqrt{3}y_0 i)^{\ell-1},
	\end{aligned}
\end{equation}
where $x_0$ and $y_0$ are the solutions of $p^t=X^2+27 Y^2$ with $(x_0,p)=1$ and $x_0\equiv 1 \pmod{3}$.
\end{rem}

\smallskip

Notice that \eqref{rec aell} can be written in the notation of \eqref{zxy} as 
\begin{equation} \label{albl}
\begin{aligned} 
a_{\ell,s} = \tfrac 12 (z_{a,b} z_{x,y}^\ell + \bar z_{a,b} \bar z_{x,y}^\ell) = Re(z_{a,b} z_{x,y}^\ell), \\
b_{\ell,s} = \tfrac 12 (z_{b, \tilde a} z_{x,y}^\ell + \bar z_{b,\tilde a} \bar z_{x,y}^\ell) = Re(z_{b, \tilde a} z_{x,y}^\ell), 
\end{aligned}
\end{equation}
where $(a,b,x,y)=(a_{0,s},b_{0,s},x_0,y_0)$ and $\tilde a = \tfrac{1}{27} a$. 
Similarly, expressions \eqref{rec aell0} can be written as the real part of a complex number. 

\smallskip 

Theorem \ref{gp3q} provides the spectrum of $\Gamma(3,q)$ explicitly, with $q=p^{3r}$ and $p$ a prime of the form $3h+1$, in terms of an integer solution $(a,b)$ of the equation $4q =X^2 +27Y^2$ with $a\equiv 1\pmod 3$ and $(a,p)=1$. 
If one wants the spectrum of $\Gamma(3,q^t)$, one needs to obtain an integer solution $(a_t,b_t)$ of the equation 
$4q^t =X^2 +27Y^2$ with $a_t \equiv 1 \pmod 3$ and $(a_t,p)=1$. This may be tedious and operationally costly.
However, Theorem \ref{spectra der 3 gen} ensures that we can obtain all the spectra of the GP-graphs $\Gamma(3,q^{t\ell})$ %($s=0$) 
from a unique base solution of the first equation $q^t =X^2 +27Y^2$. Moreover, the spectrum of $\Gamma(3,p^{t\ell})$ is given explicitly in the previous proof by expressions \eqref{spec lt+s} and \eqref{rec aell}.

\begin{exam} \label{ex 3.3}
Let $p=7$. We look for the minimal $t$ such that $7^t=X^2+27Y^2$ has an integer solution $(x,y)$ with $x\equiv 1\pmod 3$ and $(x,7)=1$. In this case $t=3$ with solution $(x_0,y_0)=(10,3)$, since $7^3=10^2 +27 \cdot 3^2$ 
(for $t=1$ there are no integer solutions and for $t=2$ we have the trivial solution $(7,0)$ but it is not of the required form). Consider $s= 1$. By \eqref{spec lt+s}, the spectrum of $\Gamma(3,7^{9\ell+3})$ for any $\ell \in \N$ is given by 
	$$\mathrm{Spec}\,\G(3,7^{9\ell+3}) = \big\{ [n_{\ell}]^1, \big[ \tfrac{a_{\ell}7^{3\ell+1}-1}{3}\big]^{n_{\ell}}, \big[ \tfrac{-\frac{1}{2} (a_{\ell}+9b_{\ell}) 7^{3\ell+1}-1}{3}\big]^{n_{\ell}}, 
	\big[ \tfrac{-\frac 12 (a_{\ell}-9b_{\ell}) 7^{3\ell+1}-1}{3}\big]^{n_{\ell}} \big\}$$
where $n_{\ell} = \frac{7^{9\ell+3}-1}{3}$
and the numbers $a_\ell$ and $b_\ell$ are, by \eqref{rec aell}, as follows
\begin{align*}
& a_{\ell} = \tfrac{1}{2}( 1- 3\sqrt{3}i)(10 - 9\sqrt{3}i)^{\ell} + \tfrac{1}{2}(1 + 3\sqrt{3}i)(10 + 9 \sqrt{3}i)^{\ell}, \\
& b_{\ell}= \tfrac{1}{2}(1 + \tfrac{\sqrt{3}i}{9})(10 - 9\sqrt{3}i)^{\ell} + \tfrac{1}{2}(1 - \tfrac{\sqrt{3}i}{9})(10 + 9\sqrt{3}i)^{\ell},
\end{align*}
where $a_0= 1$ and $b_0=1$ since $4\cdot 7=1^2+27 \cdot 1^2$.

In Table 1 we give the spectrum of $\G(3,7^{9\ell +3})$ for the first five values of $\ell$. For simplicity we only list the non principal eigenvalues $\{ \lambda_1, \lambda_2, \lambda_3 \}$ without the multiplicities and separately the principal eigenvalues.
	\begin{table}[h!]
		\caption{Spectrum of $\G(3,7^{9\ell +3})$.}
		\begin{tabular}{|c|c|c|c|c|}
			\hline
			$\ell$ & $a_{\ell}$ & $b_{\ell}$ & $q$ & Non-principal eigenvalues of $\G(3,7^{9\ell +3})$\\ \hline 
			0 & $1$ & $1$ & $7^3$ & $\{9, 2, -12\}$\\
			1 & $-71$ &$13$ & $7^{12}$ & $\{ 75231, -18408, -56824 \}$\\
			2 & $-1763$ & $-83$ & $7^{21}$ & $\{ 344515488, 139453281, -483968770 \}$\\ 
			3 & $-10907$ & $-6119$ & $7^{30}$ & $\{3106191996420, -1026985846948, -2079206149473\}$\\
			4 & $386569$ & $-93911$ & $7^{39}$ & $\{ 12484762621341194, 7406034473827068, -19890797095168263\}$\\
			\hline 
		\end{tabular}
	\end{table}

\noindent 
The principal eigenvalues %(regularity degree) 
are given by $n_0 = 114$, $n_1 = 4613762400$, $n_2 = 186181954694428002$, $n_3 = 7513113430230752695954416$ and 
$n_4 = 303181226709953713606735006629714$.
Note that we do not have to solve $4\cdot 7^{3\ell+1}=X^2+27Y^2$ for each value of $\ell \ge 1$. 
\hfill $\lozenge$
\end{exam}

Recall that an integer $a$ is a \textit{cubic residue} modulo a prime $p$ if $a\equiv x^3 \pmod p$ for some integer $x$. 
By Euler's criterion, $a$ is a cubic residue mod $p$, with $(a,p)=1$, if and only if 
\begin{equation} \label{ap cond}
a^{\frac{p-1}{d}}\equiv 1 \pmod p
\end{equation}
where $d=(3,p-1)$. 
We have the following direct consequence of Theorem \ref{spectra der 3 gen}.
\begin{thm} \label{res cubic}
Let $p$ be a prime with $p\equiv 1 \pmod{3}$.
If $2$ is a cubic residue modulo $p$, then the spectrum of $\G(3,p^3)$ determines 
the spectrum of $\G(3,p^{3\ell})$ for every $\ell \in \N$. In this case, the spectrum of $\G(3,p^{3\ell})$ is given by
\begin{equation} \label{spec p3l}
\mathrm{Spec}\,\G(3,p^{3\ell}) = \big\{ [n_\ell]^1, \big[ \tfrac{a_\ell p^{\ell}-1}{3}\big]^{n_\ell}, \big[ \tfrac{-\frac{1}{2} (a_\ell+9b_\ell)p^{\ell}-1}{3}\big]^{n_\ell}, \big[ \tfrac{-\frac 12 (a_\ell-9b_\ell) p^{\ell}-1}{3}\big]^{n_\ell} \big\}.
\end{equation}
with $n_\ell = \frac{p^{3\ell}-1}{3}$ and where $a_\ell, b_\ell$ are the numbers $a_{\ell,0}, b_{\ell,0}$ given in \eqref{rec aell0}
\begin{equation*}
\begin{aligned} 
& a_{\ell,0} = - (x_0+3\sqrt{3}y_0 i)^{\ell} -(x_0-3\sqrt{3}y_0 i)^{\ell}, \\
& b_{\ell,0} = -(y_0-\tfrac{\sqrt{3}}{9}x_0 i) (x_0+3\sqrt{3}y_0 i)^{\ell-1} - 
(y_0+\tfrac{\sqrt{3}}{9}x_0 i)(x_0-3\sqrt{3}y_0 i)^{\ell-1},
\end{aligned}
\end{equation*}
where $x_0$ and $y_0$ are the solutions of $p=X^2+27 Y^2$ with $(x_0,p)=1$ and $x_0\equiv 1 \pmod{3}$.
\end{thm}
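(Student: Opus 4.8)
The plan is to deduce this from Theorem \ref{spectra der 3 gen} by showing that, under the hypothesis that $2$ is a cubic residue modulo $p$, the minimal exponent $t$ for which $p^t=X^2+27Y^2$ has an integral solution $(x,y)$ with $(x,p)=1$ is simply $t=1$.

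First I would invoke the classical representation theorem, going back to Gauss: for a prime $p\equiv 1\pmod 3$, there exist $x,y\in\Z$ with $p=x^2+27y^2$ if and only if $2$ is a cubic residue modulo $p$ (this is a standard result in algebraic number theory; see for instance the treatments of cubic residues via the ring $\Z[\omega]$ in Ireland--Rosen or in Cox's \emph{Primes of the form $x^2+ny^2$}). Granting the hypothesis, this yields an integral solution $(x,y)$ of $p=X^2+27Y^2$. Next I would observe that such a solution automatically satisfies $(x,p)=1$: since $x^2\le p<p^2$ we have $|x|<p$, so $p\mid x$ would force $x=0$ and hence $p=27y^2$, which is impossible for a prime $p>3$. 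Thus $t=1$ satisfies the hypothesis of Theorem \ref{spectra der 3 gen}, and being the smallest positive integer it is trivially minimal with that property. Finally, replacing $x$ by $-x$ if necessary we may fix $x_0\in\{\pm x\}$ with $x_0\equiv 1\pmod 3$, which is possible because $p\equiv 1\pmod 3$ forces $x_0^2\equiv 1\pmod 3$.

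Now I would apply Theorem \ref{spectra der 3 gen} with $t=1$ and $s=0$. In the $s=0$ case — the first part of that proof — the conclusion is exactly that the spectra of $\G(3,p^{3\ell})$ for all $\ell\in\N$ are determined by the single base datum $(x_0,y_0)$, i.e.\@ by $\mathrm{Spec}\,\G(3,p^3)$. The explicit form \eqref{spec p3l} then follows by substituting into \eqref{spec p3tl} the closed expressions \eqref{rec aell0} for $a_{\ell,0},b_{\ell,0}$ recorded in Remark \ref{rem rec}; there Claim 1 in the proof of Theorem \ref{spectra der 3 gen} already guarantees that these integers satisfy the normalization conditions \eqref{ab27}, so Theorem \ref{gp3q}$(a)$ applies verbatim. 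There is no real obstacle here beyond correctly citing and applying the cubic-residue criterion; the only point needing a moment's care is the coprimality $(x,p)=1$, handled above, which is precisely what makes $t=1$ admissible in Theorem \ref{spectra der 3 gen}.
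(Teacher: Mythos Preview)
Your proposal is correct and follows essentially the same route as the paper: invoke the Gauss--Euler criterion \eqref{2res conds} to obtain an integral representation $p=x^2+27y^2$, normalize the sign of $x$ so that $x_0\equiv 1\pmod 3$, and then apply Theorem~\ref{spectra der 3 gen} with $t=1$ and $s=0$. Your explicit verification that $(x,p)=1$ via the size bound $|x|<p$ is a welcome detail that the paper leaves implicit.
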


\begin{proof}
A classic result in number theory, conjectured by Euler and first proved by Gauss using cubic reciprocity, asserts that 
(see for instance \cite{C})
\begin{equation} \label{2res conds}
	p=x^2+27 y^2 \quad \text{for some $x,y\in\mathbb{Z}$} \qquad \Leftrightarrow \qquad 
		\begin{cases}
			p\equiv 1\pmod{3} \quad \text{ and,} \\ 2 \text{ is a cubic residue modulo $p$}.
		\end{cases}
\end{equation}
By hypothesis we have that $p\equiv 1\pmod{3}$ and $2$ is a cubic residue modulo $p$, so there exist 
	$x,y \in\mathbb{Z}$ such that $p=x^2+27 y^2$. Moreover, since either $x$ or $-x$ is congruent to $1$ mod $p$, we choose the solution $(z,y)$, where $z\in \{\pm x\}$ with $z\equiv 1 \pmod 3$.
	Thus, the assertion follows directly from Theorem \ref{spectra der 3 gen} with $t=1$ and $s=0$.
\end{proof}

Note that by \eqref{ap cond}, if $p \equiv 1 \pmod 3$ as in the theorem, $2$ is a cubic residue modulo $p$ if and only if $2^{\frac{p-1}3}  \equiv 1 \pmod p$. That is, if $p=3k+1$ then $2^k \equiv 1 \pmod {3k+1}$.
Thus, the first primes $p$ of the form $3k+1$ for which $2$ is a cubic residue are $31$ and $43$ since 
$2^{10} \equiv 1 \pmod{31}$ and $2^{14} \equiv 1 \pmod{43}$. In fact, $4^3 \equiv 1 \pmod{31}$ and $20^3 \equiv 1 \pmod{43}$.

\begin{exam}  
Let $p=31$. We know that $2$ is a cubic residue modulo $31$ and in this case we have 
$31= 2^2 +27\cdot 1^2$. %In the notation of Theorem \ref{spectra der 3 gen}, 
We take the solutions $x_0=-2$ and $y_0=1$ of $31=X^2+37Y^2$.
By Theorem \ref{res cubic} and \eqref{rec aell0} in Remark \ref{rem rec} for every $\ell >0$ we have that
the spectrum of $\G(4,31^3)$ determines the spectrum of $\G(4,31^{3\ell})$ for every $\ell$ and, from \eqref{spec p3l}, 
it is given by
\begin{equation*} 
\mathrm{Spec}\,\G(3,31^{3\ell}) = \big\{ [n_\ell]^1, \big[ \tfrac{a_\ell 31^{\ell}-1}{3}\big]^{n_\ell}, \big[ \tfrac{-\frac{1}{2} (a_\ell+9b_\ell)31^{\ell}-1}{3}\big]^{n_\ell}, \big[ \tfrac{-\frac 12 (a_\ell-9b_\ell) 31^{\ell}-1}{3}\big]^{n_\ell} \big\}.
\end{equation*}
with $n_\ell = \frac{31^{3\ell}-1}{3}$ and where $a_\ell, b_\ell$ are the numbers $a_{\ell,0}, b_{\ell,0}$ given in \eqref{rec aell0}, that is
\begin{align*} 
& a_{\ell} = - (-2+3\sqrt{3} i)^{\ell} -(-2-3\sqrt{3} i)^{\ell}, \\
& b_{\ell} = -(1+\tfrac{2\sqrt{3}}{9} i) (-2+3\sqrt{3} i)^{\ell-1} - 
(1-\tfrac{2\sqrt{3}}{9} i)(-2-3\sqrt{3} i)^{\ell-1}.
\end{align*}
In Table 2 we give the spectrum of $\G(3,31^{\ell})$ for the first five values of $\ell$ (we follow the same notation as in Example \ref{ex 3.3})
\begin{table}[h!]
	\caption{First values of $a_{\ell}$ and $b_{\ell}$ for $\G(3,31^{3\ell})$}
	\begin{tabular}{|c|c|c|c|c|}
		\hline
		$\ell$ & $a_{\ell}$ & $b_{\ell}$ & $q$ & Non-principal eigenvalues of $\G(3,31^{3\ell})$\\ \hline 
		1 & $4$ & $-2$ & $31^3$ & $\{  72, 41, -114\}$\\
		2 & $46$ &$8$ & $31^6$ & $\{  14735, 4164, -18900, \}$\\
		3 & $-308$ & $30$ & $31^9$ & $\{ 2869866, 188676, -3058543 \}$\\ 
		4 & $-194$ & $-368$ & $31^{12}$ & $\{539644104, -59721025, -479923080\}$\\
		5 & $10324$ & $542$ & $31^{15}$ & $\{ 98522451641, -25985726058, -72536725584\}$\\
		\hline 
	\end{tabular}
\end{table}

\noindent where the principal eigenvalues are
$n_0 = 114$, $n_1 = 4613762400$, $n_2 = 186181954694428002$, $n_3 = 7513113430230752695954416$ and $n_4 = 303181226709953713606735006629714$.
\hfill $\lozenge$
\end{exam}

The following result for GP-graphs $\G(4,q)$ is in the same vein of Theorem \ref{spectra der 3 gen} for GP-graphs $\G(3,q)$. However, these results are somehow different, since in this case we do not need the assumption of an initial solution of an equation (as in Theorem \ref{spectra der 3 gen}), although it covers different cases.

\begin{thm} \label{spectra der 4}
	If $p$ is a prime with $p\equiv 1 \pmod{4}$, then the spectrum of $\G(4,p^{4\ell})$ is determined by the spectrum of $\G(4,p^4)$ for every $\ell \in \N$. Moreover, the spectrum of $\G(4,p^{4\ell})$ is given by
	$$Spec(\G(4,p^{4\ell})) = \big\{ [n_{\ell}]^1, \big[\tfrac{p^{2\ell} + 4d_{\ell}p^{\ell}-1}{4}\big]^{n_{\ell}}, \big[\tfrac{p^{2\ell} - 4d_{\ell}p^{\ell}-1}{4}\big]^{n_{\ell}}, 
	\big[\tfrac{-p^{2\ell} + 2c_{\ell} p^{\ell}-1}{4}\big]^{n_{\ell}}, \big[\tfrac{-p^{2\ell} - 2c_{\ell} p^{\ell}-1}{4}\big]^{n_{\ell}}\big\},$$
	where $n_\ell=\frac{p^{4\ell}-1}4$ and the numbers $c_\ell$ and $d_\ell$ are given by
	\begin{equation} \label{rec cl,dl}
c_{\ell}= \tfrac 12 (c_1+2 d_1 i)^{\ell}+ \tfrac 12 (c_1-2 d_1 i)^{\ell} \qquad \text{and} \qquad
d_{\ell}= -\tfrac{i}{4} (c_1+2 d_1 i)^{\ell}+ \tfrac{i}{4}(c_1-2 d_1 i)^{\ell},
	\end{equation}
where $c_1$ and $d_1$ are integral solutions of $p^2=X^2+ 4Y^2$ with $c_1 \equiv 1 \pmod 4$ and $(c_1,p)=1$.
\end{thm}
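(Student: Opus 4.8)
The plan is to follow the strategy of Theorem~\ref{spectra der 3 gen}, but for $k=4$ the natural ring to work in is the Gaussian integers $\Z[i]$, which shortens the argument considerably. By part~$(a)$ of Theorem~\ref{gp4q}, the spectrum of $\G(4,p^{4\ell})$ is completely determined once we exhibit integers $c_\ell,d_\ell$ satisfying
$$p^{2\ell}=c_\ell^{2}+4d_\ell^{2},\qquad c_\ell\equiv 1\pmod 4,\qquad (c_\ell,p)=1$$
(the hypotheses of Theorem~\ref{gp4q} hold automatically, since $p\equiv 1\pmod 4$ gives $p\ge 5$, hence $q=p^{4\ell}\ge 625>9$, and $\tfrac{q-1}{p-1}=1+p+\cdots+p^{4\ell-1}\equiv 4\ell\equiv 0\pmod 4$). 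So the whole task is to produce such a triple recursively starting from a base solution $(c_1,d_1)$ for $\ell=1$, whose existence is granted by Theorem~\ref{gp4q}$(a)$ applied to $\G(4,p^4)$ (see also Remark~\ref{solutions eq}), or directly by Fermat's two squares theorem: writing $p=a^2+b^2$ with $b$ even and setting $\pi=a+bi$, the Gaussian integer $\pi^{2}=(a^2-b^2)+2ab\,i$ has norm $p^{2}$, and after a sign adjustment one gets $c_1\equiv 1\pmod 4$, while $(c_1,p)=1$ because $c_1\equiv -2b^{2}\pmod p$ and $(b,p)=1$.

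First I would put $z_1=c_1+2d_1 i\in\Z[i]$, so $\|z_1\|^{2}=p^{2}$, and define $(c_\ell,d_\ell)$ by $z_1^{\ell}=c_\ell+2d_\ell i$. An immediate induction using $z_1^{\ell+1}=z_1 z_1^{\ell}$ shows $c_\ell,d_\ell\in\Z$ and that they obey
$$c_{\ell+1}=c_1 c_\ell-4d_1 d_\ell,\qquad d_{\ell+1}=c_1 d_\ell+d_1 c_\ell,$$
equivalently $c_{\ell+2}=2c_1c_{\ell+1}-p^2c_\ell$ and likewise for $d_\ell$; solving this recursion (or just reading off $\operatorname{Re}$ and $\tfrac12\operatorname{Im}$ of $z_1^{\ell}$) gives the closed forms in \eqref{rec cl,dl}. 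Taking norms yields $p^{2\ell}=\|z_1\|^{2\ell}=c_\ell^{2}+4d_\ell^{2}$, and reducing the first recursion mod $4$ gives $c_{\ell+1}\equiv c_1c_\ell\equiv c_\ell\pmod 4$, so $c_\ell\equiv 1\pmod 4$ for all $\ell$ by induction.

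The only substantive point is the coprimality $(c_\ell,p)=1$, and this is where the Gaussian integers pay off. Since $p\equiv 1\pmod 4$ we have $p=\pi\bar\pi$ with $\pi,\bar\pi$ non-associate Gaussian primes. As $\|z_1\|^{2}=p^{2}$, unique factorization in $\Z[i]$ forces $z_1=u\,\pi^{a}\bar\pi^{2-a}$ for a unit $u$ and some $a\in\{0,1,2\}$; the case $a=1$ gives $z_1=up$, hence $p\mid c_1$, contradicting $(c_1,p)=1$. So $z_1$, and hence $z_1^{\ell}$, is divisible by exactly one of $\pi,\bar\pi$, and therefore $p=\pi\bar\pi\nmid z_1^{\ell}=c_\ell+2d_\ell i$; in particular $p$ cannot divide both $c_\ell$ and $2d_\ell$. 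But $p\mid c_\ell$ would imply $p\mid 4d_\ell^{2}$ (from $p^{2\ell}=c_\ell^{2}+4d_\ell^{2}$), hence $p\mid d_\ell$ since $p$ is odd, and thus $p\mid 2d_\ell$ --- a contradiction. Therefore $(c_\ell,p)=1$ for every $\ell$. I expect this coprimality step to be the main obstacle; the purely elementary route, in the spirit of Claims~1 and~2 in the proof of Theorem~\ref{spectra der 3 gen}, would require a fussier bookkeeping of $p$-adic valuations.

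Finally, substituting $(c_\ell,d_\ell)$ into Theorem~\ref{gp4q}$(a)$ with $q=p^{4\ell}$ (so $\sqrt{q}=p^{2\ell}$, $\sqrt[4]{q}=p^{\ell}$, $n_\ell=\tfrac{p^{4\ell}-1}{4}$) produces precisely the claimed spectrum. Since the recursions above express $(c_{\ell+1},d_{\ell+1})$ in terms of $(c_\ell,d_\ell)$ and the fixed base data $(c_1,d_1)$ --- which, by Remark~\ref{ab eig}, is read off from $Spec(\G(4,p^4))$ --- an induction on $\ell$ shows that $Spec(\G(4,p^4))$ determines $Spec(\G(4,p^{4\ell}))$ for all $\ell\in\N$, as desired.
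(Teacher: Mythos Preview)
Your proof is correct and its overall architecture matches the paper's: define $z_1=c_1+2d_1 i$, set $(c_\ell,d_\ell)$ via $z_1^\ell$, verify the three conditions of \eqref{c4d}, and then quote Theorem~\ref{gp4q}$(a)$; the closed forms \eqref{rec cl,dl} come from the same linear recurrence $r_{\ell+1}=2c_1r_\ell-p^2r_{\ell-1}$ the paper writes down.

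The one genuine difference is the coprimality step $(c_\ell,p)=1$. The paper handles it exactly as you anticipate, by transplanting the elementary bookkeeping of Claims~1--2 from the proof of Theorem~\ref{spectra der 3 gen} (expanding $c_{\ell+1}$ as a telescoping sum in the $c_i$'s and arguing by minimal counterexample modulo $p$). Your route through unique factorization in $\Z[i]$ is shorter and more conceptual: from $N(z_1)=p^2$ and $(c_1,p)=1$ you force $z_1$ to be a unit times $\pi^2$ or $\bar\pi^2$, so $p=\pi\bar\pi$ can never divide $z_1^\ell$, and the contradiction with $p\mid c_\ell$ follows from the norm equation. This buys you a cleaner argument at the price of invoking the arithmetic of $\Z[i]$; the paper's version stays purely elementary but is, as you note, fussier. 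Either way the conclusion is the same.
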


\begin{proof}
	It is well known that the equation $p^2=X^2+ 4Y^2$ with $p\equiv 1\pmod{4}$ always has a solution $(x,y)$ satisfying $(x,p)=1$   (see Remark \ref{solutions eq}).
	Let $c_1, d_1$ be the solution of the above equation with $c_1\equiv 1\pmod{4}$ and $(c_1,p)=1$. 
	Notice that if we take 
	$z_{x,y}=x+2iy$, then $p^2 = \|z_{c_1,d_1} \|^2$, so we have that 
	$$p^{2\ell} = \|z_{c_1,d_1}^{\ell}\|^2.$$
	As in the proof of Theorem \ref{spectra der 3 gen}, %In the same way as before, 
	we can put 
	$z_{c_{1},d_{1}}^{\ell}=: z_{c_{\ell},d_{\ell}}$, where $c_{\ell},d_{\ell}$ are defined recursively as follows
	\begin{equation}\label{recursion t4}
		c_{\ell+1}=c_1 c_{\ell}-4d_{1} d_{\ell} \qquad \text{and} \qquad d_{\ell+1}= c_1 d_{\ell}+d_1 c_{\ell}.
	\end{equation}	
	Both sequences $\{c_{\ell}\}_{\ell \in \N_0}$ and $\{d_{\ell}\}_{\ell \in \N_0}$ also satisfy the recursion 
	\begin{equation}\label{recursion cd4}
		r_{\ell+1}=2c_1 r_{\ell}- p^2 r_{\ell-1}.
	\end{equation}		
	Proceeding similarly as in the proof of Theorem \ref{spectra der 3 gen}, 
	we can show that
	$$d_{\ell+1}=d_1 \Big(\sum_{i=1}^{\ell} c_{i} c_1^{\ell-i} + c_{1}^{\ell} \Big) \quad \text{and} \quad  
	c_{\ell} = \tfrac{1}{d_1}(d_{\ell+1} - c_1 d_{\ell}) = \sum_{i=1}^{\ell} c_{i} c_1^{\ell-i} -c_1 \sum_{i=1}^{\ell-1}c_{i} c_1^{\ell-1-i} $$
	so we have that
	\begin{equation}
		c_{\ell+1}= c_1 \sum_{i=1}^{\ell} c_{i} c_1^{\ell-i}  - p^2 \sum_{i=1}^{\ell-1}c_{i} c_1^{\ell-1-i}- 4d_{1}^2 c_{1}^{\ell-1}.
	\end{equation} 
	As in proof of Theorem \ref{spectra der 3 gen}, we can show that $(c_{\ell},p)=1$ and $c_{\ell}\equiv 1\pmod{4}$. 
	Hence, by Theorem \ref{gp4q} we have that the spectrum of $\G(4,p^{4\ell})$ is given as in the statement.
	
	In order to prove that the spectrum of $\G(4,p^{4\ell})$ is determined by the spectrum of $\G(4,p^{4})$, 
	it is enough to put every $c_{\ell}$ and $d_{\ell}$ in terms of $c_1$ and $d_1$ only.
	By solving the linear recurrence \eqref{recursion cd4} and by recalling that $c_2=c_{1}^2-4 d_1^2$ and $d_2=2c_1 d_1$, we obtain that  $c_\ell$ and $d_\ell$ are as given in \eqref{rec cl,dl}.
	Therefore, the spectrum of $\G(4,p^{4\ell})$ is determined by the spectrum of $\G(4,p^4)$, as desired.	
\end{proof}

\begin{exam}
	Let $p=5$. Since $5^2=3^2+4\cdot 2^2$, we take $c_1=-3$ and $d_1=2$.
	The spectrum of $\Gamma(4,5^{4\ell})$ is given for any $\ell \in \N$ by 
	$$Spec(\G(4,5^{4\ell})) = \big\{ [n_{\ell}]^1, \big[\tfrac{5^{2\ell} + 4d_{\ell}5^{\ell}-1}{4}\big]^{n_{\ell}}, \big[\tfrac{5^{2\ell} - 4d_{\ell}p^{\ell}-1}{4}\big]^{n_{\ell}}, 
	\big[\tfrac{-5^{2\ell} + 2c_{\ell} 5^{\ell}-1}{4}\big]^{n_{\ell}}, \big[\tfrac{-5^{2\ell} - 2c_{\ell} 5^{\ell}-1}{4}\big]^{n_{\ell}}\big\}$$
	with $n_\ell=\frac{5^{4\ell}-1}4$ and where, by \eqref{rec cl,dl}, 
	$$c_{\ell}= \tfrac 12 (-3+4 i)^{\ell}+ \tfrac 12 (-3-4 i)^{\ell} \qquad \text{and } \qquad 	d_{\ell}= -\tfrac{i}{4} (-3+4 i)^{\ell}+ \tfrac{i}{4}(-3-4i)^{\ell}.$$
	In Table 3 we give the spectrum of $\G(4,5^{4\ell})$ for the first five values of $\ell$ (we follow the same notation as in Example \ref{ex 3.3})	
	\begin{table}[h!]
		\caption{Spectrum of $\G(4,5^{4\ell})$}
		\begin{tabular}{|c|c|c|c|}
			\hline
			$\ell$ & $c_{\ell}$ & $d_{\ell}$ & Non-principal eigenvalues of $\G(4,5^{4\ell})$\\ \hline 
			1 & $-3$ & $2$ & $\{ 16, -4, -14, 1\}$\\
			2 & $-7$ & $-12$ & $\{  -144, 456, -244, -69\}$\\
			3 & $117$ & $22$ & $\{ 6656, 1156, 3406, -11219\}$\\ 
			4 & $-527$ & $168$ & $\{202656, -7344, -262344, 67031\}$\\
			5 & $237$ & $-1558$ & $\{ -2427344, 7310156, -2071094, -2811719\}$\\
			\hline 
		\end{tabular}
	\end{table}

\noindent where 
$n_1 = 156$, $n_2 = 97656$, $n_3 = 61035156$, $n_4 = 38146972656$ and $n_5 = 23841857910156$ 
are the principal eigenvalues.
\hfill $\lozenge$
\end{exam}

\section{Energy}
In this section we first study the energy of the graphs $\G=\G^*(k,q)$ for $k=3,4$ where $q=p^r$ with $p$ prime. Then, we give conditions on $\Gamma$ ensuring that $\G$ and $\bar \G$ are complementary equienergetic (i.e.\@ they have the same energy), without computing the energies of $\Gamma$ and $\bar \G$. It turns out that we will only need to know the sign of the eigenvalues of $\G$.

We begin by studying the energies of $\G^*(3,q)$ and $\G^*(4,q)$. We will give the exact values in the semiprimitive case $p\equiv -1 \pmod k$ with $k=3,4$. However, in the non-semiprimitive case $p\equiv 1 \pmod k$ we can only give bounds.

\begin{prop} \label{energies}
For the energies of $\G^*(3,q)$ and $\G^*(4,q)$ we have the following:

\noindent $(a)$ 
If $p \equiv 1 \pmod k$ with $k=3$ or $k=4$ then
\begin{gather*}
n(1+\tfrac 13 |2a \sqrt[3]{q}+1|) \le E(\G^*(3,q)) \le n \big( 1 + \tfrac 23 (|a| \sqrt[3]{q}+1) +3|b|), \\[.5em] 
n(\sqrt{q}+1) \le E(\G^*(4,q)) \le n \big( \sqrt{q}+1 +(|c|+2|d|)\sqrt[4]{q} \big).
%\end{split}
\end{gather*}

\noindent $(b)$ If $p\equiv -1 \pmod k$ then we have: 

$(i)$ If $p \equiv 2 \pmod 3$ then 
$$E(\G^*(3,q)) = \begin{cases}
2n \frac{2\sqrt{q}+1}{3} & \qquad \text{if } m \equiv 0 \pmod 4, \\[.4em]
4n \frac{\sqrt{q}+1}{3} & \qquad \text{if } m \equiv 2 \pmod 4.
\end{cases}$$

$(ii)$ If $p \equiv 3 \pmod 4$ then 
$$E(\G^*(4,q)) = \begin{cases}
n \frac{3\sqrt{q}+1}{2} & \qquad \text{if } m \equiv 0 \pmod 4, \\[.4em]
3n \frac{\sqrt{q}+1}{2} & \qquad \text{if } m \equiv 2 \pmod 4.
\end{cases}$$
\end{prop}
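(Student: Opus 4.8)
The plan is to read off the eigenvalues from Theorems \ref{gp3q}, \ref{gp4q}, \ref{gp3q+} and \ref{gp4q+} and substitute them into the definition \eqref{energy} of the energy. Since a Cayley graph and its Cayley sum graph are equienergetic (by \cite{PV3}; see also Remark \ref{equinoiso}$(iii)$), it suffices to treat $\G(k,q)$; equivalently, one notes that replacing an eigenvalue $\lambda$ of multiplicity $e$ by the pair $\pm\lambda$, each of multiplicity $e/2$, leaves $\sum_j e_{i_j}|\lambda_{i_j}|$ unchanged. Throughout we use the hypothesis $q \ge 5$, so that $\sqrt q > 1$ and $\sqrt[3]{q},\sqrt[4]{q} > 1$; this pins down the sign of the eigenvalues $\tfrac{\sqrt q - 1}{3}$, $\tfrac{\sqrt q - 1}{4}$, etc.

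Part $(b)$ is a direct computation. Each semiprimitive graph has exactly two non-principal eigenvalues, one $\ge 0$ and one $< 0$, whose multiplicities, together with that of the principal eigenvalue $n$, sum to $q$. For example, for $p \equiv 2 \pmod 3$ and $m \equiv 0 \pmod 4$ one has $Spec(\G(3,q)) = \{[n]^1, [\tfrac{\sqrt q - 1}{3}]^{2n}, [\tfrac{-2\sqrt q - 1}{3}]^{n}\}$, hence
\[
E(\G^*(3,q)) = n + 2n\cdot\tfrac{\sqrt q - 1}{3} + n\cdot\tfrac{2\sqrt q + 1}{3} = 2n\cdot\tfrac{2\sqrt q + 1}{3},
\]
and the remaining three cases of $(i)$ and $(ii)$ are obtained in exactly the same way.

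Part $(a)$ is where only bounds are available, because the signs of the non-principal eigenvalues depend on the signs of $a, b$ (resp.\ $c, d$). Since the adjacency matrix has zero trace and each of the $k$ non-principal eigenvalues has multiplicity $n = \tfrac{q-1}{k}$, we have $1 + \sum_i \lambda_i = 0$ and $E(\G^*(k,q)) = n\big(1 + \sum_i |\lambda_i|\big)$, so everything reduces to estimating $\sum_i |\lambda_i|$. For the \emph{upper} bounds I would bound each $|\lambda_i|$ by the triangle inequality, grouping the constant $-1$ with a genuinely positive quantity so that nothing is wasted: e.g.\ $|\lambda_1| = \tfrac13|a\sqrt[3]{q} - 1| \le \tfrac13(|a|\sqrt[3]{q} + 1)$ and $\tfrac13|{-}\tfrac12(a\pm 9b)\sqrt[3]{q} - 1| \le \tfrac13(\tfrac12(|a| + 9|b|)\sqrt[3]{q} + 1)$ for $k = 3$, and $\tfrac14|2c\sqrt[4]{q} - (\sqrt q + 1)| \le \tfrac14(2|c|\sqrt[4]{q} + \sqrt q + 1)$ for $k = 4$; summing the $n$ copies yields the stated upper bounds. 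For the \emph{lower} bounds the point is that grouping eigenvalues in pairs removes the unknown signs: for $k = 4$, $|\mu_1| + |\mu_2| \ge |\mu_1 + \mu_2| = \tfrac{\sqrt q - 1}{2}$ and $|\mu_3| + |\mu_4| \ge |\mu_3 + \mu_4| = \tfrac{\sqrt q + 1}{2}$, so $\sum_i|\mu_i| \ge \sqrt q$ and $E(\G^*(4,q)) \ge n(\sqrt q + 1)$; for $k = 3$, $|\lambda_1| + |\lambda_2| + |\lambda_3| \ge |\lambda_1| + |\lambda_2 + \lambda_3| \ge |\lambda_1 - (\lambda_2 + \lambda_3)| = |2\lambda_1 + 1| = \tfrac13|2a\sqrt[3]{q} + 1|$, giving $E(\G^*(3,q)) \ge n(1 + \tfrac13|2a\sqrt[3]{q} + 1|)$.

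I do not expect a genuine obstacle here: the whole statement is a routine manipulation of the spectra already computed in Section 2. The only points requiring attention are the sign bookkeeping — removing the absolute values correctly in part $(b)$, and, in part $(a)$, making sure the $-1$ is paired with a positive quantity before invoking the triangle inequality (this is exactly why the $k = 4$ upper bound loses only the additive term $n$, whereas for $k = 3$ one must also bear in mind that $a\sqrt[3]{q}$ itself may be negative).
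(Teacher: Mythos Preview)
Your proposal is correct and follows exactly the paper's approach: reduce to $\G(k,q)$ via the equienergeticity in Remark~\ref{equinoiso}$(iii)$, compute part~$(b)$ directly from the spectra in Theorems~\ref{gp3q}$(b)$ and~\ref{gp4q}$(b)$, and obtain the bounds in part~$(a)$ by applying the triangle inequality to the eigenvalues listed in parts~$(a)$ of those theorems --- indeed you supply considerably more detail than the paper, whose proof is essentially a one-line sketch. One small caveat: for the $k=3$ upper bound your estimates actually sum to $n\big(2+\tfrac23|a|\sqrt[3]{q}+3|b|\sqrt[3]{q}\big)$ rather than the printed $n\big(1+\tfrac23(|a|\sqrt[3]{q}+1)+3|b|\big)$, which appears to be a typo in the statement (a missing factor $\sqrt[3]{q}$ on the last term); since the paper's own proof says nothing beyond ``triangle inequality'', there is no way to compare further on this point.
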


\begin{proof}
It is clear from ($iii)$ in Remark \ref{equinoiso} that the graphs $\G(k,q)$ and $\G^+(k,q)$ are equienergetic for $k=3,4$. So, it is enough to compute the energy for the graphs $\G(3,q)$ and $\G(4,q)$. The result follows by the definition of energy in \eqref{energy} and by applying Theorems \ref{gp3q} and \ref{gp4q}. The equalities in parts $(i)$ and $(ii)$ in ($b$) of the statement are straightforward consequences of part ($b$) of the aforementioned theorems. The bounds in part ($a$) of the statement are deduced from part ($a$) of these theorems, by applying the triangle inequality of real numbers. 
\end{proof}

\subsubsection*{Equienergy}
We now study when the graphs $\G(3,q)$ and $\G(4,q)$ are equienergetic to their corresponding complements $\bar \G(3,q)$ and 
$\bar \G(4,q)$. 
In the case that $\G(k,q)$ is semiprimitive with $k=3,4,$ the answer is already known. In fact, we have that $q=p^{2t}$ with $p$ prime satisfying $p \equiv -1 \pmod k$ and, from Proposition 6.6 in \cite{PV3}, we obtain that 
$\{\G(3,p^{2t}), \bar \G(3,p^{2t})\}$ and $\{\G(4,p^{2t}), \bar \G(4,p^{2t})\}$ 
are pairs of equienergetic non-isospectral complementary graphs if and only if $t$ is odd. That is,  
$$\{\G(3,p^{4s+2}), \bar \G(3,p^{4s+2})\} \qquad \text{and} \qquad \{\G(4,p^{4s+2}), \bar \G(4,p^{4s+2})\}$$ 
are pairs of equienergetic non-isospectral complementary graphs for any $s\in \N$.

The following result gives a condition for $\G(k,q)$ such that $\G(k,q)$ and $\bar \G(k,q)$ are equienergetic graphs in the non-semiprimitive case.
We recall that the principal eigenvalue of a regular graph is its degree of regularity. 

\begin{thm} \label{equien Gp comp}
Let $q=p^m$ with $p$ prime, $m \in \N$ and $k=3,4$ such that $k\mid \frac{q-1}{p-1}$ and $(k,q)$ is not a semiprimitive pair. Then, the graphs $\G(k,q)$ and $\bar \G(k,q)$ are equienergetic if and only if among the non principal eigenvalues of $\G(k,q)$ exactly one is positive. In this case, the graphs $\{\G(k,q), \G^+(k,q), \bar{\G}(k,q) \}$ are mutually equienergetic and non-isospectral.
\end{thm}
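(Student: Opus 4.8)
The plan is to reduce the equienergeticity statement to a simple sign-counting condition on the non-principal eigenvalues of $\G(k,q)$, using the general relationship between the spectrum of a connected regular graph and that of its complement. Recall that if $\G$ is a $d$-regular connected graph on $N$ vertices with eigenvalues $d=\mu_0 > \mu_1 \ge \cdots \ge \mu_{N-1}$, then $\bar\G$ is $(N-1-d)$-regular and its eigenvalues are $N-1-d$ together with $-1-\mu_i$ for $i=1,\dots,N-1$. Hence $E(\bar\G) = (N-1-d) + \sum_{i=1}^{N-1} |1+\mu_i|$ while $E(\G) = d + \sum_{i=1}^{N-1} |\mu_i|$. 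First I would record, from Theorems \ref{gp3q} and \ref{gp4q}, that for $k=3,4$ in the non-semiprimitive case ($p\equiv 1 \pmod k$) the graph $\G(k,q)$ has $N=q=p^m$ vertices, degree $d=n=\frac{q-1}{k}$, and exactly $k$ distinct non-principal eigenvalues $\lambda_1,\dots,\lambda_k$, each of the form $\lambda_j = \frac{\alpha_j \sqrt[k]{q}-1}{k}$ for suitable integers (or half-integers) $\alpha_j$, each with multiplicity $n$; moreover $\sum_{j} \alpha_j = 0$ in both cases (for $k=3$ the three values of $\alpha$ are $a, -\tfrac12(a+9b), -\tfrac12(a-9b)$ which sum to $0$; for $k=4$ they are $\sqrt q \pm 4d, -\sqrt q \pm 2c$ which also sum to $0$). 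Consequently $1+\lambda_j = \frac{\alpha_j\sqrt[k]{q} + (k-1)}{k}$, and since $\sqrt[k]{q}=p^{m/k}$ is large and the $\alpha_j$ sum to zero but are not all zero, the sign of $1+\lambda_j$ agrees with the sign of $\lambda_j$ for each $j$ once $q$ is large enough; more carefully, one checks that the eigenvalue $-1$ is never an eigenvalue of $\G(k,q)$ here (this follows because $k \nmid \alpha_j \sqrt[k]q$ in the relevant sense, or directly from the explicit integral spectra), so $1+\lambda_j \ne 0$ and the signs are genuinely determined.

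Next I would compute the difference $E(\bar\G) - E(\G)$. Using $N-1-d = q-1-n$ and $d=n$, this difference equals $(q-1-2n) + \sum_{j=1}^k n\big(|1+\lambda_j| - |\lambda_j|\big)$. Writing $s_j \in \{+1,-1\}$ for the common sign of $\lambda_j$ and $1+\lambda_j$, we have $|1+\lambda_j| - |\lambda_j| = s_j$, so the sum collapses to $q-1-2n + n\sum_{j=1}^k s_j$. Since $n = \frac{q-1}{k}$, we get $E(\bar\G)-E(\G) = (q-1)\big(1 - \tfrac{2}{k}\big) + \tfrac{q-1}{k}\sum_{j=1}^k s_j = \tfrac{q-1}{k}\big(k - 2 + \sum_{j=1}^k s_j\big)$. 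For $k=3$ this vanishes iff $\sum s_j = -1$, i.e.\ exactly one $s_j$ is positive and two are negative; for $k=4$ it vanishes iff $\sum s_j = -2$, i.e.\ exactly one $s_j$ is positive and three are negative. In both cases the condition is precisely ``exactly one non-principal eigenvalue is positive,'' which is the statement. I would also note that since the $\alpha_j$ sum to zero and are not all zero, at least one $\lambda_j$ is positive and at least one is negative, so the possible values of the positive count are constrained, and the equienergetic case is exactly the extreme one.

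For the final sentence of the theorem, I would invoke item $(iii)$ of Remark \ref{equinoiso}: $\G(k,q)$ and $\G^+(k,q)$ are always equienergetic (and non-isospectral) in the non-semiprimitive case, so under the sign condition $\G(k,q)$, $\G^+(k,q)$ and $\bar\G(k,q)$ all have the same energy. Non-isospectrality of $\bar\G(k,q)$ from the other two follows because its spectrum is not symmetric in the same pattern — concretely, $\G(k,q)$ and $\bar\G(k,q)$ cannot be isospectral since that would force $d = N-1-d$, which fails, and likewise $\G^+(k,q)$ has a different (more symmetric) spectrum as recorded in Theorems \ref{gp3q+} and \ref{gp4q+}; alternatively one compares smallest eigenvalues directly using the explicit spectra.

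The main obstacle I anticipate is the careful verification that $-1$ is never a non-principal eigenvalue (equivalently $1+\lambda_j\ne 0$) in all the relevant subcases, so that the sign of $|1+\lambda_j|-|\lambda_j|$ is cleanly $\pm 1$ rather than $0$; this requires using the integrality of the spectra from Theorems \ref{gp3q} and \ref{gp4q} together with the coprimality conditions $(a,p)=1$, $(c,p)=1$ to rule out the degenerate possibility. A secondary point needing care is confirming that $\lambda_j$ and $1+\lambda_j$ always share a sign — this is immediate when $|\lambda_j|\ge 1$, which holds because $\lambda_j$ is a nonzero integer whenever it is negative or whenever $\sqrt[k]q$ is large, but the edge case of a small positive $\lambda_j$ (which could a priori lie in $(0,?)$ while $1+\lambda_j>0$ automatically) is actually harmless since there both quantities are positive. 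Handling these low-order exceptions is the only delicate part; the algebraic collapse of the energy difference is routine once the signs are pinned down.
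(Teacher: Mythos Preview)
Your argument is essentially the same as the paper's: both compute $E(\bar\G)-E(\G)$ and collapse it using the identity $|\lambda+1|-|\lambda|=\mathrm{sign}(\lambda)$ for nonzero integers $\lambda$, arriving at the condition $\sum_j \mathrm{sign}(\lambda_j)=-(k-2)$, i.e.\ exactly one positive non-principal eigenvalue. Two small remarks: first, your worry about $\lambda_j=-1$ is unnecessary, since for any nonzero integer $\lambda$ (including $\lambda=-1$) the identity $|\lambda+1|=|\lambda|+\mathrm{sign}(\lambda)$ holds on the nose --- the only thing you actually need is $\lambda_j\neq 0$, which follows immediately from the explicit form $\lambda_j=\frac{\alpha_j\sqrt[k]q-1}{k}$ with $p\ge 5$; second, for the non-isospectrality of $\bar\G(k,q)$ with $\G^+(k,q)$ the paper simply observes that they have different regularity degrees ($(k-1)n$ versus $n$), which is quicker than comparing spectral shapes.
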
 

\begin{proof}
We begin by showing that $\G(k,q)$ and $\bar{\G}(k,q)$ are equienergetic graphs if and only if $\G(k,q)$ has only one positive non-principal eigenvalue for $k=3,4$.

Let us first consider $k=3$. Since $(3,q)$ is not semiprimitive, then  $p\equiv 1\pmod 3$ and $m=3t$ for some $t\in\mathbb{N}$.
Moreover, by ($a$) in Theorem \ref{gp3q}, the spectrum of $\G(3,q)$ is
	$$\mathrm{Spec}\,\G(3,q)=\big\{ [n]^1, \big[ \tfrac{a\sqrt[3]{q}-1}{3}\big]^{n}, \big[ \tfrac{-\frac{1}{2} (a+9b)\sqrt[3]{q}-1}{3}\big]^{n}, 
	\big[ \tfrac{-\frac 12 (a-9b) \sqrt[3]{q}-1}{3}\big]^{n} \big\}$$
where $n=\frac{q-1}3$ and the integers $a,b$ satisfy $4\sqrt[3]{q}=a^{2}+27b^2$ with $a\equiv 1\pmod{3}$ and $(a,p)=1$. Hence, if we denote by $A_3=\{\lambda \in \mathrm{Spec}(\G) : \lambda \ne n \}$ the set of non-principal eigenvalues of $\G(3,q)$, then its energy 
can be written as 
	\begin{equation} \label{En G3}
	E(\G(3,q))= n \big( 1 + \sum_{\lambda\in A_3}|\lambda| \big). 
	\end{equation}

Now, recall that the eigenvalues of $\bar \G(3,q)$ are $q-1-n=(k-1)n=2n$ and $-1-\lambda$ for $\lambda$ a non-principal eigenvalue of $\G(3,q)$.
Hence, we have 
$$E(\bar\G (3,q))= 2n + n \sum_{\lambda\in A_3} |\lambda+1|.$$
Since every element in $A_3$ is a non-zero integer we have
	\begin{equation*} \label{mod Lambda}
	|\lambda+1| = |\lambda| +\mathrm{sign}(\lambda) 
	\end{equation*} 
for all $\lambda \in A_3$, where sign$(t)$ denotes the sign of an integer $t$ (i.e.\@ sign$(t) =1$ if $t>0$ and sign$(t)=-1$ if $t<0$).
Thus, the energy of $\bar \G(3,q)$ takes the following form
	\begin{equation} \label{En G3bar}
	E(\bar\G (3,q))= n \big(2+ \sum_{\lambda\in A_3} |\lambda| + \sum_{\lambda\in A_3} \mathrm{sign}(\lambda) \big).
	\end{equation}
Therefore, by \eqref{En G3} and \eqref{En G3bar}, we have that $E(\G(3,q))=E(\bar \G(3,q))$ if and only if 
	$$\sum_{\lambda\in A_3}\mathrm{sign}(\lambda) = -1.$$
This can only happen if $\G(3,q)$ has only one non-principal positive eigenvalue and the other two non-principal eigenvalues are negative. In fact, $\#A_3=3$ since $b$ in \eqref{ab27} cannot be $0$ (if $b=0$ in \eqref{ab27}, then $(a,p)>1$, which is absurd).
	
The case $k=4$ is similar. If we denote by $A_4$ the set of non-principal eigenvalues of $\G(4,q)$, proceeding as before (we omit the details) we get that 
$E(\G(4,q))=E(\bar \G(4,q))$ if and only if 
	$$\sum_{\lambda\in A_4}\mathrm{sign}(\lambda) = -2.$$
This can only happen if among the non-principal eigenvalues there is exactly one which is positive. This is because
	$\#A_4=4$ since $d\ne 0$ in \eqref{c4d}.

Now, assume we are in the case that the graphs $\G(k,q)$ and $\bar{\G}(k,q)$ are equienergetic. They are also non-isospectral since they have different degrees of regularity. 
Notice that $\G(k,q)$ and $\G^+(k,q)$ are equienergetic non-isospectral graphs by $(iii)$ in Remark \ref{equinoiso}. By transitivity, the three graphs $\G(k,q)$, $\G^+(k,q)$ and $\bar{\G}(k,q)$ are equienergetic. Finally, $\bar \G$ and $\G^+$ are non-isospectral since they have different degrees of regularity. This completes the proof.
\end{proof}

\begin{coro} \label{cond equien}
Let $q=p^m$ with $p$ prime, $m \in \N$, and let $k=3,4$ such that $k\mid \frac{q-1}{p-1}$ and $(k,q)$ is not a semiprimitive pair.
Suppose that $a,b$ and $c,d$ are pairs of integers satisfying conditions \eqref{ab27} and \eqref{c4d} respectively.

\begin{enumerate}[$(i)$]	
\item If $a>9|b|$ or else if $a<0$ and $-a<9|b|$ then $\{\G(3,q), \G^+(3,q), \bar{\G}(3,q) \}$ are equienergetic and non-isospectral.

\item If $2|c|<\sqrt[4]{q}<4|d|$ then the graphs $\{\G(4,q), \G^+(4,q), \bar{\G}(4,q) \}$ are equienergetic and non-isospectral. 
In particular, $\{\G(4,q), \G^+(4,q), \bar{\G}(4,q) \}$ are equienergetic and non-isospectral when $|c|<\tfrac{2\sqrt{3}}{3} |d|$.
\end{enumerate}
\end{coro}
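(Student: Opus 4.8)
The plan is to reduce everything to Theorem \ref{equien Gp comp}: that theorem says that in the non‑semiprimitive case the graphs $\{\G(k,q),\G^+(k,q),\bar\G(k,q)\}$ are mutually equienergetic and non‑isospectral \emph{exactly when} $\G(k,q)$ has precisely one positive non‑principal eigenvalue. So the whole task is to translate the arithmetic hypotheses on $(a,b)$ and on $(c,d)$ into the count ``exactly one positive non‑principal eigenvalue''. The only analytic ingredient needed is that $\sqrt[3]{q}=p^{t}\ge 7$ (since $p\equiv 1\pmod 3$ forces $p\ge 7$) and $\sqrt[4]{q}=p^{t}\ge 5$ (since $p\equiv 1\pmod 4$ forces $p\ge 5$); consequently, for a \emph{nonzero} integer $\alpha$ one has $\alpha\,q^{1/k}-1>0$ if and only if $\alpha>0$, i.e.\ the sign of each non‑principal eigenvalue equals the sign of the integer coefficient multiplying $q^{1/k}$ in its numerator.

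For part $(i)$ I would write the non‑principal eigenvalues of $\G(3,q)$ from Theorem \ref{gp3q}$(a)$ as $\tfrac13(\alpha_j\sqrt[3]{q}-1)$ with $\alpha_1=a$, $\alpha_2=-\tfrac12(a+9b)$, $\alpha_3=-\tfrac12(a-9b)$. First one checks these are nonzero integers: $a\equiv b\pmod 2$ follows from $4\sqrt[3]{q}=a^2+27b^2$, so $\alpha_2,\alpha_3\in\Z$; $\alpha_1=0$ would contradict $(a,p)=1$, and $\alpha_2=0$ or $\alpha_3=0$ would force $3\mid a$, contradicting \eqref{ab27}. Since $\alpha_1+\alpha_2+\alpha_3=0$, I then split into the two cases of hypothesis $(i)$. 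If $a>9|b|$, then $\alpha_1>0$ while $a\pm 9b>0$ gives $\alpha_2,\alpha_3<0$, so exactly one is positive. If $a<0$ and $-a<9|b|$, then $\alpha_1<0$, and since $\alpha_2+\alpha_3=-a>0$ at least one of $\alpha_2,\alpha_3$ is positive, while $-a<9|b|$ forces the other to be negative; again exactly one is positive. Theorem \ref{equien Gp comp} then concludes.

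For part $(ii)$, put $u=\sqrt[4]{q}=p^{t}\in\Z$, so $u^2=\sqrt q=c^2+4d^2$ and $d\ne 0$. By Theorem \ref{gp4q}$(a)$ the non‑principal eigenvalues are $\tfrac14(u^2+4du-1)$, $\tfrac14(u^2-4du-1)$, $\tfrac14(-u^2+2cu-1)$, $\tfrac14(-u^2-2cu-1)$, and using $u\ge 5$ one gets $\lambda>0$ iff, respectively, $u+4d>0$, $u-4d>0$, $2c-u>0$, $-2c-u>0$. The key observation is a pairing dichotomy. For the first two: $\max\{u+4d,\,u-4d\}=u+4|d|\ge 5>0$, so at least one is positive, while both are positive iff $u-4|d|>0$, i.e.\ $u>4|d|$; hence the hypothesis $\sqrt[4]{q}<4|d|$ gives exactly one positive among the first two. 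For the last two: $\max\{2c-u,\,-2c-u\}=2|c|-u$, so none is positive precisely when $u\ge 2|c|$; hence the hypothesis $\sqrt[4]{q}>2|c|$ makes both negative. Together, exactly one non‑principal eigenvalue is positive, and Theorem \ref{equien Gp comp} applies. For the ``in particular'' clause, note $2|c|<\sqrt[4]{q}\iff 4c^2<c^2+4d^2\iff 3c^2<4d^2\iff |c|<\tfrac{2\sqrt3}{3}|d|$, and that $3c^2<4d^2$ gives $\sqrt q=c^2+4d^2<\tfrac{16}{3}d^2<16d^2$, i.e.\ $\sqrt[4]{q}<4|d|$; so $|c|<\tfrac{2\sqrt3}{3}|d|$ already implies the full hypothesis $2|c|<\sqrt[4]{q}<4|d|$.

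There is no deep obstacle here — given Theorem \ref{equien Gp comp}, this is bookkeeping. The one place that needs care is the $k=4$ case: one must organize the four eigenvalue signs into the two pairs governed separately by $d$ and by $c$, exploit that the $d$‑pair always has \emph{at least} one positive member while the $c$‑pair has \emph{at most} one, and keep the inequalities strict in the right places. The accompanying nonzero‑coefficient checks ($(a,p)=1$ and $a\equiv 1\pmod 3$ for $k=3$, and $d\ne 0$ for $k=4$) are exactly what guarantees that no non‑principal eigenvalue equals $0$, so that the signs are unambiguous.
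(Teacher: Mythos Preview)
Your proof is correct and follows essentially the same route as the paper: reduce to Theorem~\ref{equien Gp comp} and verify that the arithmetic hypotheses on $(a,b)$ (resp.\ $(c,d)$) force exactly one positive non-principal eigenvalue, via the explicit spectra of Theorems~\ref{gp3q}$(a)$ and~\ref{gp4q}$(a)$. Your write-up is in fact more careful than the paper's in several places (checking that each $\alpha_j$ is a nonzero integer, using the sum $\alpha_1+\alpha_2+\alpha_3=0$ for the $a<0$ case, and organizing the four $k=4$ eigenvalues into a $d$-pair and a $c$-pair), but these are refinements of the same argument rather than a different approach.
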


\begin{proof}
By Theorem \ref{equien Gp comp}, it is enough to see that among the non principal eigenvalues of $\G(k,q)$, $k=3,4$, exactly one is positive and the other ones are all negative. 

($i$) By Theorem \ref{gp3q}, the spectra of $\G(3,q)$ is given by
$$Spec(\G(3,q)) = \big\{ [n]^1, \big[\tfrac{a\sqrt[3]{q}-1}{3}\big]^n, \big[\tfrac{-\frac{1}{2} (a+9b)\sqrt[3]{q}-1}{3}\big]^n, 
\big[\tfrac{-\frac 12 (a-9b) \sqrt[3]{q}-1}{3}\big]^n \big\} $$
with $a,b$ satisfying \eqref{ab27}. 
Now, if $a>9|b|>0$ then $a-9b>0$ and $a+9b>0$, this implies that 
$$\tfrac{a\sqrt[3]{q}-1}{3}>0, \qquad \tfrac{-\frac{1}{2} (a+9b)\sqrt[3]{q}-1}{3}<0, \qquad \text{and} \qquad 
\tfrac{-\frac 12 (a-9b) \sqrt[3]{q}-1}{3}<0.$$
On the other hand, if $a<0$ and $-a<9|b|$ we have that either
$$\tfrac{a\sqrt[3]{q}-1}{3}<0, \qquad \tfrac{-\frac{1}{2} (a+9b)\sqrt[3]{q}-1}{3}<0, \qquad \text{and} \qquad 
\tfrac{-\frac 12 (a-9b) \sqrt[3]{q}-1}{3}>0 $$ 
for $b>0$ or else
$$\tfrac{a\sqrt[3]{q}-1}{3}<0, \qquad \tfrac{-\frac{1}{2} (a+9b)\sqrt[3]{q}-1}{3}>0, \qquad \text{and} \qquad 
\tfrac{-\frac 12 (a-9b) \sqrt[3]{q}-1}{3}<0$$ 
for $b<0$. Hence, the conditions given by $(i)$ assure that exactly one of the non-principal eigenvalues is positive.

($ii$) By Theorem \ref{gp4q}, the spectra of $\G(4,q)$ is given by
$$Spec(\G(4,q)) = \big\{ [n]^1, \big[\tfrac{\sqrt{q} + 4d\sqrt[4]{q}-1}{4}\big]^n, \big[\tfrac{\sqrt{q} - 4d\sqrt[4]{q}-1}{4}\big]^n, 
\big[\tfrac{-\sqrt{q} + 2c \sqrt[4]{q}-1}{4}\big]^n, \big[\tfrac{-\sqrt{q} - 2c \sqrt[4]{q}-1}{4}\big]^n \big\}$$
with $c,d$ satisfying conditions \eqref{c4d}.
In a similar way as in ($i$), one can check that the condition $2|c|<\sqrt[4]{q}<4|d|$ assures that one of the eigenvalues of $\G(4,q)$ is positive and the other ones are negative. Therefore, we have that
$\{\G(4,q), \G^+(4,q), \bar{\G}(4,q) \}$ are equienergetic and non-isospectral. 
For the last sentence, notice that the conditions $2|c|<\sqrt[4]{q}$ and $\sqrt[4]{q}<4|d|$ are equivalent to 
$3c^2<4d^2$ and $c^2<12 d^2$ respectively, since $\sqrt q= c^2+4d^2$. Thus, if 
$|c|<\tfrac{2\sqrt{3}}{3} |d|$ then $2|c|<\sqrt[4]{q}$ and $\sqrt[4]{q}<4|d|$. Therefore, we obtain that $\{\G(4,q), \G^+(4,q), \bar{\G}(4,q) \}$ are equienergetic and non-isospectral in this case, as asserted. 
\end{proof}

We now illustrate the previous proposition and corollary. 

\begin{exam} 
Here we give complementary equienergetic pairs of graphs $\G(k,q)$ with $k=3,4$.
	
($i$) Let $p=7$ and  $m=6$. Hence, $q=7^6=117{.}649$, $p\equiv 1 \pmod 3$ and the pair $(3,7^6)$ is not semiprimitive. The integers $a=13$ and $b=1$ satisfy \eqref{ab27} in Theorem~\ref{gp3q}, since $4 \sqrt[3]{q} = 13^2+ 27\cdot 1^2=196$.
Thus, the integers $a,b$ satisfy the hypothesis of Corollary \ref{cond equien} and hence we known that $\G(3,7^6)$ and 
$\bar\G(3,7^6)$ are equienergetic and non-isospectral, without the need to compute the spectrum. 
The spectrum of $\G(3,7^6)$ is given by
$$Spec(\G(3,7^6)) = \{ [n]^1, [212]^{n}, [-180]^{n}, [-33]^{n} \}$$
where $n= \frac{q-1}3=39216$. Therefore, we see that there is only one positive non-principal eigenvalue and Theorem~\ref{equien Gp comp} also shows that  
$\G(3,7^6)$ and $\bar\G(3,7^6)$ are equienergetic and non-isospectral.

\noindent
($ii$) Let $p=5$, $m=8$ and $q=5^8=390625$. Thus $p\equiv 1 \pmod 4$ and $(4, 5^8)$ is not a semiprimitive pair of integers. The integers $c=-7$ and $d=12$ satisfy \eqref{c4d} in Theorem \ref{gp4q}, since $625=\sqrt{q}=7^2+4\cdot 12^2$, and hence we have 
$$Spec(\G(4,5^8)) = \{ [n]^1, [456]^{n}, [-69]^{n}, [-144]^{n}, [-244]^{n} \}$$
where $n=\frac{q-1}4=97656$. 
Therefore, the spectra of $\G(3,7^6)$ has only one positive non-principal eigenvalu and by Theorem~\ref{equien Gp comp} we have that $\G(4,5^8)$ and $\bar\G(4,5^8) $ are equienergetic and non-isospectral. 
This also follows more easily using Corollary \ref{cond equien} directly, since $c$ and $d$ satisfy $|c|<2\tfrac{\sqrt 3}{3}|d|$.
\hfill $\lozenge$
\end{exam}

\begin{exam}
Now, we give a graph $\G(3,q)$ which it not complementary equienergetic.
Let $p=7$ and $m=3$, hence $q=7^3=343$. Since $p\equiv 1 \pmod 3$, we have to find integers  $a,b$ such that
$28=4 \sqrt q=a^2+27b^2$, $a\equiv 1 \pmod 3$ and $(a,7)=1$. Clearly $a=b=1$ satisfy these conditions. By ($a$) in Theorem~\ref{gp3q} we have $Spec(\G(3,7^3)) = \big\{ [114]^1, [9]^{114}, [2]^{114}, [-12]^{114} \big\}$
and thus we get $Spec(\bar \G(3,7^3)) = \{ [2\cdot 114]^1, [-10]^{114}, [-3]^{114}, [11]^{114} \}$. 
Hence,  
\begin{gather*}
E(\G(3,7^3))=114 \, (1+9+2+12)  %=24 \cdot 114 = 2{.}736 \ne 2{.}964 = 
\ne 114 \, (2+10+3+11) %=26 \cdot 114 
= E(\bar \G(3,7^3)).	
\end{gather*}
Therefore, $E(\G(3,7^3)) \ne E(\bar \G(3,7^3))$. Notice that the graph $\G(3,7^3)$ has more that one positive non-prinicpal eigenvalue. Also, the conditions for $a$ and $b$ in Corollary \ref{cond equien} do not hold.
	
Similarly, by using ($a$) in Theorem \ref{gp4q} one can get that $E(\G(4,5^4)) \ne E(\bar{\G(4,5^4)})$.
\hfill $\lozenge$
\end{exam}

\section{Infinite pairs of complementary equienergetic graphs}
In this final section we give infinite families of complementary equienergetic non-isospectral graphs 
$\{\Gamma^*(k,q), \bar \Gamma^*(k,q)\}$ for $k=3,4$, where $q$ is a power of a prime $p$, for infinite different prime numbers $p$. 
Since we will take $(k,q)$ a non-semiprimitive pair of integers, the involved graphs will be all neither bipartite nor strongly regular. This complements the results obtained in \cite{PV5} where we characterized all bipartite graphs and all strongly regular graphs which are complementary equienergetic.

We begin with GP-graphs of the form $\G(3,q)$.

\begin{thm} \label{thm Equien 3}
Let $p$ be a prime with $p\equiv 1 \pmod{3}$.
If there is some $t\in \N$ such that 
$p^t=x^2+27 y^2$ 
for some $x,y \in \Z$ with $(x,p)=1$, 
then there are infinitely many  $\ell \in \mathbb{N}$, such that
$\{\G(3,p^{3(t\ell+s)}), \bar \G(3,p^{3(t\ell+s)}) \}$ 
are equienergetic and non-isospectral for any $s\in\{0,1,\ldots,t-1\}$.
\end{thm}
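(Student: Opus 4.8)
The plan is to reduce the statement, by way of Theorem~\ref{equien Gp comp}, to the positivity of a single real number---namely $\operatorname{Re}(J_{\ell,s}^{3})$ for an explicit element $J_{\ell,s}\in\Z[\sqrt{-3}]$---and then to secure that positivity for infinitely many $\ell$ through an equidistribution argument whose only non-routine ingredient is the irrationality of the argument of a fixed element of $\Z[\sqrt{-3}]$. Throughout, fix $s\in\{0,1,\dots,t-1\}$; the argument below produces infinitely many $\ell\in\N$ for this $s$, and is run for each $s$ separately.

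First I would record the spectral description coming from Section~3. Let $(x_{0},y_{0})$ be the solution of $p^{t}=X^{2}+27Y^{2}$ with $(x_{0},p)=1$ and $x_{0}\equiv1\pmod 3$ fixed in the proof of Theorem~\ref{spectra der 3 gen}, and set $z:=x_{0}+3\sqrt{3}\,y_{0}\,i=x_{0}+3y_{0}\sqrt{-3}$, so that $\|z\|^{2}=p^{t}$. By Theorem~\ref{gp3q}$(a)$ and that proof, for every $\ell\ge1$ the non-principal eigenvalues of $\G(3,p^{3(t\ell+s)})$ are $\tfrac{\alpha_{j}\,p^{t\ell+s}-1}{3}$, $j=1,2,3$, with $\alpha_{1}=a_{\ell,s}$, $\alpha_{2}=-\tfrac12(a_{\ell,s}+9b_{\ell,s})$ and $\alpha_{3}=-\tfrac12(a_{\ell,s}-9b_{\ell,s})$, where $a_{\ell,s},b_{\ell,s}$ satisfy \eqref{ab27} with $q=p^{3(t\ell+s)}$. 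Setting $J_{\ell,s}:=a_{\ell,s}+3\sqrt{3}\,b_{\ell,s}\,i$, formulas \eqref{albl} (for $s\ge1$) and \eqref{rec aell0} (for $s=0$) amount, on comparing real and imaginary parts, to the single identity $J_{\ell,s}=C_{s}\,z^{\ell}$, where $C_{0}=-2$ and $C_{s}=a_{0,s}+3\sqrt{3}\,b_{0,s}\,i$ for $s\ge1$ with $(a_{0,s},b_{0,s})$ the solution of $4p^{s}=X^{2}+27Y^{2}$ chosen in that proof; in particular $\|J_{\ell,s}\|^{2}=4p^{t\ell+s}$ and $J_{\ell,s}^{3}=C_{s}^{3}\,z^{3\ell}$.

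Next I would translate the equienergy condition. Since $p^{t\ell+s}\ge p\ge7$ and each $\alpha_{j}$ is a nonzero integer---integral because $a_{\ell,s}\equiv b_{\ell,s}\pmod 2$ (read off $4p^{t\ell+s}=a_{\ell,s}^{2}+27b_{\ell,s}^{2}$ modulo $4$), and nonzero because $\alpha_{1}=a_{\ell,s}$ is prime to $p$ while $\alpha_{2}=0$ or $\alpha_{3}=0$ would give $p\mid b_{\ell,s}$ and hence $p\mid a_{\ell,s}$---the sign of $\tfrac{\alpha_{j}\,p^{t\ell+s}-1}{3}$ coincides with that of $\alpha_{j}$. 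As $\alpha_{1}+\alpha_{2}+\alpha_{3}=0$ with every $\alpha_{j}\ne0$, exactly one $\alpha_{j}$ is positive if and only if $\alpha_{1}\alpha_{2}\alpha_{3}>0$, and a direct expansion gives $\alpha_{1}\alpha_{2}\alpha_{3}=\tfrac14\bigl(a_{\ell,s}^{3}-81\,a_{\ell,s}b_{\ell,s}^{2}\bigr)=\tfrac14\operatorname{Re}(J_{\ell,s}^{3})$. By Theorem~\ref{equien Gp comp} it follows that $\{\G(3,p^{3(t\ell+s)}),\bar\G(3,p^{3(t\ell+s)})\}$ is equienergetic---and automatically non-isospectral, the two graphs having distinct degrees---exactly when $\operatorname{Re}(J_{\ell,s}^{3})>0$. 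Writing $z=p^{t/2}e^{i\theta}$, this is the condition $\operatorname{Re}(C_{s}^{3}z^{3\ell})=\|C_{s}\|^{3}\,p^{3\ell t/2}\cos\bigl(3\arg C_{s}+3\ell\theta\bigr)>0$, i.e.\@ $\cos(3\arg C_{s}+3\ell\theta)>0$.

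The heart of the matter is that $\theta/\pi$ is irrational, hence so is $3\theta/\pi$. Indeed, if $\theta/\pi$ were rational then $z^{2}/p^{t}=e^{2i\theta}$ would be a root of unity in $K=\mathbb{Q}(\sqrt{-3})$, hence one of $\pm1,\pm\zeta,\pm\zeta^{2}$ with $\zeta=e^{2\pi i/3}$; but $z^{2}=(x_{0}^{2}-27y_{0}^{2})+6x_{0}y_{0}\sqrt{-3}$ has integer coordinates in the $\Z$-basis $\{1,\sqrt{-3}\}$ of $\Z[\sqrt{-3}]$, so $z^{2}$ equals neither $p^{t}(\pm\zeta)$ nor $p^{t}(\pm\zeta^{2})$---each of which has half-integer coordinates, as $p$ is odd---nor $\pm p^{t}$, which would force $x_{0}y_{0}=0$, impossible for $p\ge7$ (since $y_{0}=0$ gives $p\mid x_{0}$, and $x_{0}=0$ gives $p^{t}=27y_{0}^{2}$). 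Granting this, Kronecker's theorem shows that $\{3\ell\theta\bmod 2\pi:\ell\in\N\}$ is dense in $[0,2\pi)$, so $3\arg C_{s}+3\ell\theta$ lands in $(-\tfrac{\pi}{2},\tfrac{\pi}{2})\pmod{2\pi}$---equivalently $\operatorname{Re}(J_{\ell,s}^{3})>0$---for infinitely many $\ell\in\N$. For each such $\ell$ the pair $\{\G(3,p^{3(t\ell+s)}),\bar\G(3,p^{3(t\ell+s)})\}$ is equienergetic and non-isospectral, proving the theorem. I expect this irrationality statement to be the main obstacle: without it the finite orbit $\{3\ell\theta\bmod 2\pi\}$ could in principle avoid the half-circle on which the cosine is positive.
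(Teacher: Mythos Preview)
Your argument is correct and, while it shares the paper's ultimate reliance on density of $\{\ell\,\mathrm{Arg}(z)\bmod 2\pi\}$, the reduction you carry out is cleaner and genuinely different in a couple of ways worth noting.

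The paper invokes Corollary~\ref{cond equien}, which gives only \emph{sufficient} sign configurations on $(a_{\ell,s},b_{\ell,s})$, and then separately translates cases $s=0$ and $s>0$ into the angular conditions $\tfrac{\pi}{6}<\mathrm{Arg}(z^{\ell})<\tfrac{\pi}{2}$ and $0<\mathrm{Arg}(z_{a_{\ell,s},b_{\ell,s}})<\tfrac{\pi}{6}$ respectively---each a sixth of the circle. You instead go back to Theorem~\ref{equien Gp comp} and observe that, since $\alpha_{1}+\alpha_{2}+\alpha_{3}=0$ with all $\alpha_{j}\ne0$, ``exactly one positive'' is equivalent to $\alpha_{1}\alpha_{2}\alpha_{3}>0$, and then identify this product as $\tfrac14\operatorname{Re}(J_{\ell,s}^{3})$. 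This yields the exact equienergy criterion $\cos(3\arg C_{s}+3\ell\theta)>0$, a half-circle condition, and treats all $s$ uniformly via $J_{\ell,s}=C_{s}z^{\ell}$. Your irrationality argument also differs: the paper deduces $\mathrm{Arg}(z)\notin\mathbb{Q}\pi$ from $(y_{\ell},p)=1$ (hence $\mathrm{Im}(z^{\ell})\ne0$ for every $\ell$), whereas you argue directly that $z^{2}/p^{t}$ cannot be one of the six roots of unity in $\mathbb{Q}(\sqrt{-3})$ by a parity check on coordinates. Both routes are sound; yours is more self-contained and gives a sharper (necessary as well as sufficient) angular description, while the paper's is closer to the explicit case analysis of Corollary~\ref{cond equien}.
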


\begin{proof}
Let $x,y,t$ be integers satisfying $p^t=x^2+27y^2$ with $t>0$ and $(x,p)=1$. We split the proof in two cases, 
$s=0$ and $s>0$.

($a$) First, assume that $s=0$.
Since $p>3$ is prime, we have that necessarily $x\not \equiv 0\pmod{3}$, so we can choose $x$ such that $x\equiv 1 \pmod{3}$.

If we put $z_{x,y} = x + 3\sqrt{3}i y$, then $p^t= \|z_{x,y}\|^2$. 
From the proof of Theorem \ref{spectra der 3 gen}, the spectra of $\G(3,p^{3t\ell})$ is given by
	$$\mathrm{Spec}\,\G(3,p^{3t\ell})=\big\{ [n_{\ell}]^1, \big[ \tfrac{a_{\ell}p^{t\ell}-1}{3}\big]^{n_{\ell}}, \big[ \tfrac{-\frac{1}{2} (a_{\ell}+9b_{\ell})p^{t\ell}-1}{3}\big]^{n_{\ell}}, 
	\big[ \tfrac{-\frac 12 (a_{\ell}-9b_{\ell}) p^{t\ell}-1}{3}\big]^{n} \big\}$$
with $n_\ell = \frac{p^{3t\ell}-1}{3}$ and $a_{\ell}=-2x_{\ell-1}$ and $b_{\ell}=-2 y_{\ell-1}$, where $x_{\ell}$ and $y_{\ell}$ are recursively defined by
	\begin{equation}\label{recursion t}
		x_\ell = x_0 x_{\ell-1}-27 y_0 y_{\ell-1} \qquad \text{and} \qquad y_{\ell}=x_0 y_{\ell-1}+y_0 x_{\ell-1}
	\end{equation}
for $\ell>0$, where $x_0=x, \,y_{0}=y$.
By Corollary \ref{cond equien}, the GP-graph $\G(3,p^{3t\ell})$ is equienergetic with its complement, 
in the following cases:
\begin{equation} \label{cases}
	(i) \quad \text{if $a_{\ell}>0$ and $a_{\ell} >9|b_{\ell}|$} \qquad \text{or} \qquad 
	(ii) \quad \text{if $a_{\ell}<0$ and $-a_{\ell}<9|b_{\ell}|$}.
\end{equation}
If $a_{\ell},b_{\ell}$ are both negative numbers, then item $(ii)$ written in terms of $x_{\ell},y_\ell$ is equivalent to
$x_{\ell-1}<9y_{\ell-1}$ with %\qquad \text{with} \quad 
$x_{\ell-1},y_{\ell-1}>0$.
By taking into account that $x_{\ell-1} = \mathrm{Re}(z_{x_{\ell-1},y_{\ell-1}})$ and  
$y_{\ell-1} = \frac{\sqrt{3}}{9}\mathrm{Im}(z_{x_{\ell-1},y_{\ell-1}})$  
and the fact that $z_{x_{\ell-1},y_{\ell-1}}=z_{x,y}^{\ell}$, if we take $z:=z_{x,y}$ then
	$$x_{\ell-1}=\mathrm{Re}(z^{\ell}) \quad \text{ and} \quad y_{\ell-1}=\tfrac{\sqrt{3}}{9} \mathrm{Im}(z^{\ell}).$$
So, item $(ii)$, in these terms, implies that
	\begin{equation}\label{p equien t}
		\tfrac{\pi}{6} = \arctan(\tfrac{\sqrt{3}}{3}) < \mathrm{Arg}(z^{\ell}) < \tfrac{\pi}{2} \quad \Rightarrow \quad E(\G(3,p^{3t\ell})) = E(\bar \G(3,p^{3t\ell})).
	\end{equation}

Notice that $(y_{\ell},p)=1$ for all $\ell \in \mathbb{N}_0$, since $p\mid x_{\ell}$ otherwise. 
So, we have that $\mathrm{Im}\, z^{\ell} \ne 0$ for all $\ell\in\mathbb{N}_0$. This implies that $\mathrm{Arg} \,z \not \in \mathbb{Q} \pi$, so we obtain that the classes of $\mathrm{Arg}\, z^{\ell}$ are dense in $\mathbb{R}/2\pi\mathbb{Z}$ when $\ell$ runs over $\mathbb{N}_0$. Therefore, there are infinite values $\ell \in \mathbb{N}_0$ satisfying \eqref{p equien t}, so there are infinite values $\ell\in \mathbb{N}_0$ such that $\G(3,p^{3t\ell})$ is equienergetic with its complement, as asserted.
	
\smallskip 
($b$) Now assume that $s\in \{1,\ldots,t-1\}$, and let $a_{0,s},b_{0,s}\in \mathbb{Z}$ with $a_{0,s}\equiv 1\pmod{3}$ and $(a_{0,s},p)=1$ such that 
	$$4p^s=a_{0,s}^2+27b_{0,s}^2 = \|z_{a_0,b_0}\|^2.$$
As in the proof of Theorem \ref{spectra der 3 gen}, if $z_{a_\ell,s,b_{\ell.s}}=a_{\ell,s}+3\sqrt{3} b_{\ell,s}i$ 
where $a_{\ell,s}$ and $b_{\ell,s}$ are defined recursively by 
	$$a_{\ell,s}=a_{0,s} x_{\ell-1}-27 b_{0,s} y_{\ell-1} \qquad \text{and}\qquad 
	b_{\ell,s}=a_{0,s} y_{\ell-1}+b_{0,s} x_{\ell-1},$$
then $4p^{t\ell+s}=a_{\ell,s}^2+27b_{\ell,s}^2=\|z_{a_{\ell,s},b_{\ell,s}}\|^2$ with $a_{\ell,s}\equiv 1\pmod{3}$ and $(a_{\ell,s},p)=1$, thus $a_{\ell,s}$, $b_{\ell,s}$ give the spectra of $\G(3,p^{3(t\ell+s)})$.
Moreover, we have that 
	$$z_{a_{\ell,s},b_{\ell,s}}=z_{a_{0,s},b_{0,s}} z_{x_0,y_0}^{\ell}.$$
As before, by Corollary \ref{cond equien} the GP-graph $\G(3,p^{3(t\ell+s)})$ is equienergetic with its complement in the two cases in \eqref{cases}, with $a_\ell=a_{\ell,s}$ and $b_\ell = b_{\ell,s}$.   
In this case, when $a_{\ell,s},b_{\ell,s}$ are both positive integers, the item ($i$) ensures that 
	\begin{equation}\label{p equien t s}
		0< \mathrm{Arg} (z_{a_{\ell,s},b_{\ell,s}}) < \arctan(\tfrac{\sqrt{3}}{3}) = \tfrac{\pi}{6} \quad \Rightarrow \quad  E(\G(3,p^{3(t\ell+s)})) = E(\bar \G(3,p^{3(t\ell+s)})).
	\end{equation}
Since $\mathrm{Arg} (z_{x_0,y_0}^{\ell})$ is dense in $\mathbb{R}/2\pi\mathbb{Z}$ when $\ell$ runs over $\mathbb{N}$ and $z_{a_{\ell,s},b_{\ell,s}}=z_{a_{0,s},b_{0,s}} z_{x_0,y_0}^{\ell}$, hence we obtain that
$\mathrm{Arg} (z_{a_{\ell,s},b_{\ell,s}})$ is dense in $\mathbb{R}/2\pi\mathbb{Z}$ when $\ell$ runs over $\mathbb{N}$. 
Therefore, there are infinite values $\ell \in \mathbb{N}$ satisfying \eqref{p equien t s}, so there are infinite values $\ell\in \mathbb{N}$ such that $\G(3,p^{3(t\ell+s)})$ is equienergetic with its complement, as desired.	
\end{proof}

As a direct consequence of the previous theorem we obtain the following result.
\begin{coro}
	Let $p$ be a prime with $p\equiv 1 \pmod{3}$ such that $2$ is a cubic residue modulo $p$. 
	Then, there are infinite $\ell \in \mathbb{N}$, such that $\{\G(3,p^{3\ell}),\bar \G(3,p^{3\ell})\}$ are equienergetic.
\end{coro}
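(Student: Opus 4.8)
The plan is to deduce this immediately from Theorem~\ref{thm Equien 3} by checking that its single hypothesis is satisfied. Recall from \eqref{2res conds} (the classical Euler--Gauss criterion for the representation $p = X^2 + 27Y^2$) that a prime $p$ is of the form $x^2 + 27y^2$ for some $x,y \in \Z$ if and only if $p \equiv 1 \pmod 3$ and $2$ is a cubic residue modulo $p$. Both conditions hold by hypothesis, so such a representation exists; this is precisely the instance $t=1$ of the equation $p^t = X^2 + 27Y^2$ occurring in Theorem~\ref{thm Equien 3}.

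Next I would verify the coprimality requirement $(x,p)=1$ of Theorem~\ref{thm Equien 3}. This is automatic: since $p>3$ is prime, $p \nmid 27$; if $p \mid x$ then $p = x^2+27y^2$ forces $p \mid 27y^2$, hence $p \mid y$, and then $p^2 \mid x^2+27y^2 = p$, which is absurd. Thus $(x,p)=1$ (and similarly $(y,p)=1$), so the hypothesis of Theorem~\ref{thm Equien 3} is met with $t=1$.

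Applying Theorem~\ref{thm Equien 3} with $t=1$ and $s=0$ (the only admissible value of $s$ in $\{0,\ldots,t-1\}$) then produces infinitely many $\ell \in \N$ for which $\{\G(3,p^{3\ell}), \bar\G(3,p^{3\ell})\}$ is an equienergetic, in fact non-isospectral, pair, which is exactly the assertion.

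I do not anticipate any genuine obstacle: the statement merely packages Theorem~\ref{thm Equien 3} --- whose proof already supplies the essential content, namely the density of $\{\mathrm{Arg}(z_{x,y}^\ell)\}_\ell$ in $\R/2\pi\Z$ --- together with the arithmetic input \eqref{2res conds}. The only step needing even a line of justification is the trivial coprimality check above; alternatively one could invoke Theorem~\ref{res cubic}, which has identical hypotheses, to obtain the explicit spectra of $\G(3,p^{3\ell})$ and then conclude directly from Corollary~\ref{cond equien}.
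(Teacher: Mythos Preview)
Your proposal is correct and follows essentially the same route as the paper: invoke \eqref{2res conds} to obtain the representation $p=x^2+27y^2$, note $(x,p)=1$, and apply Theorem~\ref{thm Equien 3} with $t=1$ and $s=0$. The only difference is that you spell out the (trivial) coprimality check, which the paper leaves implicit.
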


\begin{proof}
Take $t=1$. If $2$ is a cubic residue modulo $p$ then $p=X^2+27Y^2$ has integer solutions $x,y$ with $(x,p)=1$, by \eqref{2res conds}. Now, taking $s=0$ in the previous theorem we get the desired result.
\end{proof}

We now give the analogous of Theorem \ref{thm Equien 3} for GP-graphs of the form $\G(4,q)$.

\begin{thm} \label{thm Equien 4}
	Let $p$ be a prime with $p\equiv 1\pmod{4}$. Then, there exist infinitely many $\ell\in \mathbb{N}$ such that
	$\{\G(4,p^{4\ell}), \bar \G(4,p^{4\ell})\}$ are equienergetic.
\end{thm}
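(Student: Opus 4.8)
The plan is to mirror the proof of Theorem~\ref{thm Equien 3}, replacing Theorem~\ref{spectra der 3 gen} by Theorem~\ref{spectra der 4} and using part~$(ii)$ of Corollary~\ref{cond equien}. Since $p\equiv 1\pmod 4$, there is an integral solution $(c_1,d_1)$ of $p^2=X^2+4Y^2$ with $c_1\equiv 1\pmod 4$ and $(c_1,p)=1$; I set $z:=c_1+2id_1$, so that $\|z\|^2=c_1^2+4d_1^2=p^2$ and $z^\ell=z_{c_\ell,d_\ell}$ with $c_\ell,d_\ell$ the integers appearing in Theorem~\ref{spectra der 4}. Writing $z^\ell=p^\ell e^{i\theta_\ell}$ with $\theta_\ell=\ell\,\mathrm{Arg}(z)$ gives $c_\ell=p^\ell\cos\theta_\ell$ and $2d_\ell=p^\ell\sin\theta_\ell$. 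As $q=p^{4\ell}$ makes $\sqrt[4]{q}=p^\ell$, the condition $2|c_\ell|<\sqrt[4]{q}<4|d_\ell|$ of Corollary~\ref{cond equien}$(ii)$ reduces to $|\cos\theta_\ell|<\tfrac12$ (equivalently $\theta_\ell\bmod\pi\in(\tfrac\pi3,\tfrac{2\pi}3)$); note the second inequality $|\sin\theta_\ell|>\tfrac12$ is then automatic, since $|\cos\theta_\ell|<\tfrac12$ forces $|\sin\theta_\ell|>\tfrac{\sqrt3}2$.

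Next I would check that $\mathrm{Arg}(z)$ is an irrational multiple of $\pi$. In the proof of Theorem~\ref{spectra der 4} it is shown that $(c_\ell,p)=1$ for every $\ell$, and this forces $d_\ell\ne0$ for all $\ell\ge1$, since $d_\ell=0$ would give $p^{2\ell}=c_\ell^2$ and hence $p\mid c_\ell$. Therefore $\mathrm{Im}(z^\ell)=2d_\ell\ne0$ for all $\ell\ge1$, so $z^\ell$ is never real and $\mathrm{Arg}(z)\notin\mathbb{Q}\pi$. As in the proof of Theorem~\ref{thm Equien 3}, it follows that the classes of $\theta_\ell$ are dense in $\mathbb{R}/2\pi\mathbb{Z}$ when $\ell$ ranges over $\mathbb{N}$.

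Finally, the arc $\{\theta:|\cos\theta|<\tfrac12\}$ is a nonempty open subset of $\mathbb{R}/2\pi\mathbb{Z}$, so $\theta_\ell$ lies in it for infinitely many $\ell\in\mathbb{N}$. For each such $\ell$, Corollary~\ref{cond equien}$(ii)$ gives that $\G(4,p^{4\ell})$, $\G^+(4,p^{4\ell})$ and $\bar\G(4,p^{4\ell})$ are mutually equienergetic, and $\G(4,p^{4\ell})$ is non-isospectral to $\bar\G(4,p^{4\ell})$ because their degrees of regularity, $n_\ell$ and $3n_\ell$, differ. This produces infinitely many $\ell$ with $\{\G(4,p^{4\ell}),\bar\G(4,p^{4\ell})\}$ equienergetic and non-isospectral, as desired. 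The one step needing genuine care is the irrationality of $\mathrm{Arg}(z)/\pi$, which --- exactly as in the cubic case --- rests on the coprimality $(c_\ell,p)=1$ from Theorem~\ref{spectra der 4}; the remaining work is just the routine rewriting of the inequalities of Corollary~\ref{cond equien}$(ii)$ as a condition on $\theta_\ell$, and this case is in fact simpler than $k=3$ because the base equation $p^2=X^2+4Y^2$ is solvable with no extra hypothesis.
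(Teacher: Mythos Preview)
Your proposal is correct and follows essentially the same approach as the paper's proof: set $z=c_1+2id_1$, translate the sufficient condition of Corollary~\ref{cond equien}$(ii)$ into an open arc condition on $\mathrm{Arg}(z^\ell)$, argue that $\mathrm{Arg}(z)\notin\mathbb{Q}\pi$ via the coprimality $(c_\ell,p)=1$ from Theorem~\ref{spectra der 4}, and conclude by density. The only cosmetic difference is that the paper uses the simplified criterion $|c_\ell|<\tfrac{2\sqrt3}{3}|d_\ell|$ (and restricts to $c_\ell,d_\ell>0$, landing in the arc $(\tfrac\pi3,\tfrac\pi2)$), whereas you use the full condition $2|c_\ell|<\sqrt[4]{q}<4|d_\ell|$ and reduce it to $|\cos\theta_\ell|<\tfrac12$; both yield nonempty open arcs, so the density argument goes through identically.
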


\begin{proof}
If we put $z_{x,y}=x+2yi $, then $p^2= \|z_{c_1,d_1}\|^2$,
where $c_1$ and $d_1$ are integral solutions of $p^2=X^2+ 4Y^2$ with $c_1 \equiv 1 \pmod 4$ and $(c_1,p)=1$. 
From the proof of Theorem \ref{spectra der 4}, the spectra of $\G(3,p^{4\ell})$ is given by
	$$Spec(\G(4,p^{4\ell})) = \big\{ [n_{\ell}]^1, \big[\tfrac{p^{2\ell} + 4d_{\ell}p^{\ell}-1}{4}\big]^{n_{\ell}}, \big[\tfrac{p^{2\ell} - 4d_{\ell}p^{\ell}-1}{4}\big]^{n_{\ell}}, 
	\big[\tfrac{-p^{2\ell} + 2c_{\ell} p^{\ell}-1}{4}\big]^{n_{\ell}}, \big[\tfrac{-p^{2\ell} - 2c_{\ell} p^{\ell}-1}{4}\big]^{n_{\ell}}\big\},$$
where $n_\ell=\frac{p^{4\ell}-1}4$ and $c_\ell, d_{\ell}$ are recursively defined as follows
	\begin{equation*}
		c_{\ell+1}=c_1 c_{\ell}-4d_{1} d_{\ell} \qquad \text{and} \qquad d_{\ell+1}= c_1 d_{\ell}+d_1 c_{\ell}.
	\end{equation*}	
By Corollary \ref{cond equien}, the graph $\G(4,p^{3\ell})$ is equienergetic with its complement if it satisfies
$|c_{\ell}|<\tfrac{2\sqrt{3}}{3} |d_{\ell}|$.
If $c_{\ell}$ and $d_{\ell}$ are both positive integers, using that $z_{c_{\ell},d_{\ell}}=z_{c_1,d_1}^{\ell}$, we obtain that
	\begin{equation}\label{p equien 4}
		\tfrac{\pi}{3} = \arctan(\sqrt{3}) < \mathrm{Arg}(z_{c_1,d_{1}}^{\ell}) < \tfrac{\pi}{2} \qquad \Rightarrow \qquad 
		E(\G(4,p^{4\ell})) = E(\bar \G(4,p^{4\ell})) .
	\end{equation}	
The same argument as in the case $s=0$ in Theorem \ref{thm Equien 3} ensures that the classes of $\mathrm{Arg}(z_{c_1,d_1}^{\ell})$ are dense in $\mathbb{R}/2\pi\mathbb{Z}$ when $\ell$ runs over $\mathbb{N}$. Therefore, there are infinite values $\ell \in \mathbb{N}$ satisfying \eqref{p equien 4}, so there are infinite values $\ell\in \mathbb{N}$ such that $\G(4,p^{4\ell})$ is equienergetic with its complement, as we wanted to see.
\end{proof}

As a final remark, we want to stress that in \cite{PV5} we have classified all bipartite regular graphs $\Gamma_{bip}$ and all strongly regular graphs $\Gamma_{srg}$ which are complementary equienergetic, i.e.\@ $\{\Gamma_{bip}, \bar{\Gamma}_{bip}\}$ and $\{\Gamma_{srg}, \bar{\Gamma}_{srg}\}$ are equienergetic pairs of non-isospectral graphs. 
Here, using non-semiprimitive GP-graphs of the form $\G(3,q)$ and $\G(4,q)$, Theorems \ref{thm Equien 3} and \ref{thm Equien 4} ensure that there are infinitely many pairs of equienergetic non-isospectral regular graphs $\{\Gamma, \bar \Gamma\}$ which are neither bipartite nor strongly regular.

\end{document}